\documentclass[hidelinks,onefignum,onetabnum]{siamart251216}
\usepackage{subcaption}
\usepackage{booktabs}
\usepackage{float}
\usepackage{pgfplots}
\pgfplotsset{compat=1.18}  
\usepgfplotslibrary{groupplots}

\usepackage{lipsum}
\usepackage{amsfonts}
\usepackage{graphicx}
\usepackage{epstopdf}
\usepackage{bm}
\usepackage{algorithmic}
\ifpdf
  \DeclareGraphicsExtensions{.eps,.pdf,.png,.jpg}
\else
  \DeclareGraphicsExtensions{.eps}
\fi

\newsiamremark{remark}{Remark}
\newsiamremark{hypothesis}{Hypothesis}
\crefname{hypothesis}{Hypothesis}{Hypotheses}
\newsiamthm{claim}{Claim}

\headers{Decorated particle method for Vlasov--Poisson}{M. B. Quashie, J. W. Burby, A. J. Christlieb, and Q. Tang}

\title{A Structure-Preserving Decorated Particle Method for the Vlasov--Poisson System\thanks{Submitted to the editors May 2026.
\funding{This work was partially supported by the ASCR program of Mathematical Multifaceted Integrated Capability Center (MMICC). QT was partially supported by the Department of Energy Office of Science, Early Career Research Program under Award Number DE-SC0026277.}}}

\author{Mandela B.~Quashie\thanks{Department of Computational Mathematics, Science and Engineering, Michigan State University, East Lansing, MI  (\email{mbq@msu.edu}, \email{christli@msu.edu}).}
\and J.~W.~Burby\thanks{Department of Physics and Institute for Fusion Studies, The University of Texas at Austin, Austin, TX (\email{joshua.burby@austin.utexas.edu}).}
\and Andrew J.~Christlieb\footnotemark[2]
\and Qi Tang\thanks{School of Computational Science and Engineering, Georgia Institute of Technology, Atlanta, GA (\email{qtang@gatech.edu}).}}

\usepackage{amsopn}

\usepackage[dvipsnames]{xcolor} 

\hypersetup{
  colorlinks=true,    
  linkcolor=blue, 
  citecolor=BrickRed, 
  urlcolor=teal       
}

\ifpdf
\hypersetup{
  pdftitle={Decorated particle method for Vlasov--Poisson},
  pdfauthor={M. B. Quashie, J. W. Burby, A. J. Christlieb, and Q. Tang}
}
\fi

\begin{document}

\maketitle

\begin{abstract}
We revisit the Scovel--Weinstein framework (\emph{Scovel \& Weinstein, CPAM 1994}) for reducing the Vlasov--Poisson system while preserving its Hamiltonian structure. Standard particle-in-cell (PIC) algorithms approximate the distribution function by macro-particles with position and velocity. In contrast, Scovel-Weinstein \emph{decorated} particles involve additional shape degrees of freedom, while maintaining a finite-dimensional reduction with Hamiltonian structure inherited from the continuum model. Although the original work established this structure three decades ago, its computational potential has remained largely unexplored. We present a practical implementation of the Scovel--Weinstein model and compare it with a standard PIC algorithm. Numerical experiments demonstrate that macro-particles in standard PIC can be replaced by far fewer decorated particles while retaining comparable accuracy. This decorated particle approach offers a new structure-preserving paradigm for kinetic plasma simulation.
\end{abstract}

\begin{keywords}
Vlasov--Poisson, Hamiltonian, Particle-in-cell
\end{keywords}

\begin{MSCcodes}
65M75, 70H05, 70G65
\end{MSCcodes}

\section{Introduction}
The Vlasov--Poisson (VP) system is a standard kinetic description of collisionless electrostatic plasmas, modeling the self-consistent evolution of charged particles under electrostatic forces. The VP system possesses a Hamiltonian structure of Lie--Poisson type  with exact conservation laws and an infinite family of Casimir invariants~\cite{morrison1980maxwell,morrison2017structure}. This geometric viewpoint, consistent with the broader interpretation of continuum dynamics as motion on diffeomorphism groups~\cite{arnold1966geometrie,holm1998euler,marsden2013introduction}, has inspired the development of modern structure-preserving plasma models and algorithms. See the textbooks \cite{Abraham_2008,marsden2013introduction,holm_dynamics_2011,hazeltine2018framework} for more details.
An important class of kinetic modeling algorithms contains the particle-in-cell (PIC) algorithms~\cite{birdsall2018plasma}, which represent the distribution function using finite collections of computational particles and the electric field using a spatial mesh. These methods notably avoid introducing a mesh in high-dimensional phase space.

Over the past decade, substantial progress has been made toward structure-preserving PIC algorithms.  
Early efforts~\cite{chen2011energy,chacon2013charge,chen2014energy,chen2015multi} introduced implicit schemes that conserve energy and charge, while later work~\cite{qin2015canonical,qiang2018symplectic} developed symplectic integrators for long-time Hamiltonian evolution of the Vlasov--Poisson and Vlasov--Maxwell systems. The remarkable paper \cite{crouseilles_hamiltonian_2015} introduced the idea of structure-preserving PIC based on Hamiltonian splitting. Although this work was flawed due to its use of a Poisson bracket for the Vlasov-Maxwell system that does not satisfy the Jacobi identity,
later work addressed the flaw, leading to the modern geometric electromagnetic PIC framework \cite{he_hamiltonian_2015,Kraus_GEMPIC_pub_2017} that features explicit time advance steps and exact preservation of Hamiltonian structure.  This framework inspired several finite-element and variational extensions~\cite{campos2022variational,wang2021geometric}. More recently, a generalized-momentum formulation enforcing the Lorenz gauge~\cite{christlieb2024particleII,christlieb2025particleI,christlieb2025particleIII} unified consistent particle-field coupling through a family of gauge-compatible time integrators, while \cite{ricketson2025explicit} demonstrated exact energy conservation in a fully-explicit PIC scheme for the first time. Recent reviews~\cite{morrison2017structure,ren2024recent} highlight these advances as part of a broader effort to integrate Hamiltonian structure directly into algorithmic design.

Standard PIC uses a Dirac delta function or a smooth B-spline as a non-dynamical shape function to approximate distribution functions, which introduces sampling noise and demands large particle counts for accurate statistics. To mitigate this, the Complex Particle Kinetic (CPK) method~\cite{hewett2003fragmentation} introduced Gaussian particles  with adaptive shape parameters. By allowing particles to capture local structure in phase space, CPK improved accuracy with fewer particles and motivated the idea that enriching particle structure could reduce cost without compromising fidelity. 
Another enriching idea was proposed by Scovel and Weinstein~\cite{scovel1994finite}, in which particles in PIC acquire shape degrees of freedom (DOFs) by allowing them to carry derivatives of Dirac distributions up to arbitrary order. In this work we refer to a particle together with its shape degrees of freedom as a \emph{decorated particle}. This construction yields finite-dimensional Lie--Poisson systems in which particle shapes evolve without artificial dissipation and preserve the geometric structure of the underlying VP dynamics. 
Since then, no systematic numerical development has appeared in the published literature that implements the full Scovel--Weinstein method. The closest anyone has come is P. Channell in the  paper \cite{channell1995canonical}, which can be understood as implementing a simplification of the Scovel-Weinstein method that assumes vanishing phase space moments of order $0$ and $1$. Notably, the only recent work related to the Scovel--Weinstein technique is a Hamiltonian fluid closure framework~\cite{burby2023variable}. Consequently, the Scovel--Weinstein PIC (SWPIC) framework has remained largely unexplored, despite offering a novel pathway for model reduction. Rather than truncating moment hierarchies or filtering kinetic information, SWPIC embeds phase-space structure into a nonstandard particle representation. 

The present work develops the first numerical implementation of the original Scovel--Weinstein method and compares it with traditional methods. Numerical experiments indicate that the use of decorated particles allows a substantial reduction in particle count relative to standard PIC while maintaining comparable accuracy in field
evolution and bulk kinetic quantities. This reduction is achieved without noticeable degradation in growth rates, damping behavior, or energy evolution, and is accompanied by a decrease in statistical noise. We consider only the first non-trivial extension of PIC offered by SWPIC, which attaches phase space moments of orders $0$ (Dirac) and $1$ (first derivative of Dirac) to each particle. We will explore implementations with moment orders from $0$ to $N>1$ in future work. 

The remainder of this paper is organized as follows. Section~\ref{sec:swmodel} introduces the Scovel--Weinstein reduction and the resulting SWPIC formulation. Sections~\ref{sec:init}--\ref{sec:discrete-hamiltonian} describe initialization, numerical analysis, and the discrete Hamiltonian structure. Section~\ref{sec:numerical} presents numerical experiments, including test-particle dynamics, two-stream instability, and strong Landau damping. Conclusions are given in Section~\ref{sec:conclusion}.

\section{Vlasov--Poisson System}
We consider the VP system in $d$ spatial dimensions on a domain $\mathcal{Q}$. We allow the space $\mathcal{Q}$ to be either unbounded, $\mathcal{Q} = \mathbb{R}^d$, or periodic, $\mathcal{Q} = \mathbb{T}^d$, where $\mathbb{T} = \mathbb{R}\text{ mod }L$ and $L$ is a box side length. 
The unknowns are the distribution function
\(
f:\mathcal{Q}\times\mathbb{R}^d\times\mathbb{R}\to\mathbb{R},
\)
and the electrostatic potential
\(
\varphi:\mathcal{Q}\times\mathbb{R}\to\mathbb{R}.
\)

The VP system is given by 
\begin{gather}
\partial_t f + \frac{p}{m}\cdot\nabla_q f - e\,\nabla_q \varphi \cdot \nabla_p f = 0, \label{eq:vlasovd} \\[0.5em]
-\epsilon_0\,\Delta \varphi = e\left(\int f(q,p)\,dp - n_0(q)\right) \label{eq:poissond},
\end{gather} where $f(q,p)$ denotes the particle distribution function and $\varphi(q)$ the electrostatic potential, with time dependence understood. Here we use subscripts to denote partial gradients of functions of $(q,p)$.  The function $n_0(q)$ represents a background of motionless particles with charge $-e$ that ensures ensure overall charge neutrality. The right-hand side of \eqref{eq:poissond} therefore has zero spatial mean. Accordingly, $\varphi$ is determined up to an additive constant that we fix by requiring the spatial average of $\varphi$ vanishes. Particles have mass $m$, charge $e$, position $q\in\mathcal{Q}$, and momentum $p\in\mathbb{R}^d$. 

The VP system admits a Hamiltonian formulation on the space of distribution
functions $f$ that we summarize below. The Hamiltonian functional is
\begin{equation}
\label{eq:vp_hamiltonian}
H(f)
= \int
\frac{|p|^2}{2m}\,f(q,p)\,dq\,dp
+ \frac{\epsilon_0}{2}
\int |\nabla \varphi(q)|^2\,dq,
\end{equation}
where the electrostatic potential $\varphi$ is regarded as a functional of $f$ through the Poisson equation~\eqref{eq:poissond}. The first term represents the kinetic energy of the particle distribution, while the second term represents the electrostatic field energy. The Hamiltonian structure of the VP system \cite{morrison2017structure} is a natural starting point for structure-preserving plasma algorithms. 
The starting point for this work is the decorated particle formulation in~\cite{scovel1994finite}, which will be discussed carefully below.

\section{Scovel--Weinstein phase space moment model}
\label{sec:swmodel} In the Scovel--Weinstein formulation, standard marker particles from PIC are replaced with decorated particles. Each decorated particle carries moment degrees of freedom based on a local expansion of the distribution function that captures deformations in phase space. If only the lowest-order moments (order $0$) are included, decorated particles reduce to marker particles. Their equations of motion comprise a finite-dimensional noncanonical Hamiltonian system, ensuring moment evolution without artificial dissipation. Here we summarize the key
results of~\cite{scovel1994finite} and the theoretical ingredients needed for numerical implementation. First we will assume that the spatial domain is unbounded $\mathcal{Q} = \mathbb{R}^d$. We indicate how the model changes when $\mathcal{Q} = \mathbb{T}^d$ in Section~\ref{the_sw_model}.

\subsection{Hamiltonian perspective on PIC}
\label{sec:frompic}
In analytical descriptions of PIC methods, the distribution function is written in Klimontovich form~\cite{klimontovich2013statistical} as
\begin{equation}
f(q,p)
  = \sum_{A=1}^{N_{\text{PIC}}} \psi^*_A\,\delta(q - Q_A)\,\delta(p - P_A),
\label{eq:fpic}
\end{equation}
where each marker particle \(\zeta_A=(Q_A,P_A,\psi^*_A)\) carries a position and momentum $Z_A =(Q_A,P_A)$, as well as a weight $\psi^*_A$.   The delta functions localize each marker particle in phase space. The contribution to the Poisson equation from particle-$A$ is the charge density \(e\,\psi^*_A\,\delta(q-Q_A)\). The spatial $\delta$-function in Poisson's equation is sometimes replaced by a compactly supported shape function \(S_h(q-Q_A)\), which regularizes charge deposition while recovering~\eqref{eq:fpic} in the limit \(S_h\to\delta\).  
The representation \eqref{eq:fpic} is coupled to a spatial discretization used to compute the self-consistent field. Particle charge is deposited onto a spatial mesh, the electrostatic potential is solved numerically, and the resulting field is evaluated at particle locations when pushing particles. For the present discussion, these steps are viewed abstractly as operators mapping particle data to fields, without specifying a particular discretization.

The continuous-time dynamical model that underlies standard PIC can be derived from the Hamtiltonian formulation of the VP system as follows. Let
\[
  \mathfrak g = C^\infty(\mathbb{R}^{2d})
\]
denote the Lie algebra of smooth phase-space observables, equipped with the canonical Poisson bracket
\begin{equation}
  \{h_1,h_2\}_{\mathrm{can}}
   = \nabla_q h_1\cdot \nabla_p h_2 - \nabla_qh_2\cdot\nabla_p h_1.
\label{eq:canonicalbracket}
\end{equation}
The dual space \(\mathfrak g^*\) may be identified with the space of phase-space distribution functions $f$. For any observable \(h\in\mathfrak g\) and any distribution \(f\in\mathfrak g^*\), the pairing is
\begin{equation}
  \langle f,h\rangle
    = \int f(q,p)\,h(q,p)\,dq\,dp.
\label{eq:pairing}
\end{equation}
The Lie--Poisson construction
\cite{morrison1980maxwell,morrison1986paradigm,marsden1982hamiltonian}
then uses the Lie algebra structure on $\mathfrak{g}$ to define a Poisson bracket between functionals \(F,G:\mathfrak g^*\to\mathbb{R}\) given by
\begin{equation}
  \{F,G\}(f)
   = \int f\,
      \Bigl\{
         \frac{\delta F}{\delta f},
         \frac{\delta G}{\delta f}
      \Bigr\}_{\mathrm{can}}
      dq\,dp,
\label{eq:lp-vp}
\end{equation}
where $\delta F/\delta f$ denotes the functional derivative of $F$ with respect to $f$ defined by
\begin{align*}
    \frac{d}{d\epsilon}\bigg|_{\epsilon=0}F(f+\epsilon\,\delta f) = \int \frac{\delta F}{\delta f}\delta f\,dq\,dp,\quad \forall \delta f\in\mathfrak{g}^*.
\end{align*}
With Hamiltonian \eqref{eq:vp_hamiltonian}, 
Hamilton’s equation
\(
  \partial_t F = \{F, H\}
\)
for arbitrary $F:\mathfrak{g}\rightarrow \mathbb{R}$ recovers the VP system in weak form. 

Let $\bm{\zeta} = (\zeta_1,\dots,\zeta_{N_{\text{PIC}}})$ denote the multi-marker-particle state. The Klimontovich representation~\eqref{eq:fpic} defines a Poisson map 
\[
\Gamma_0 : \bm{\zeta}
   \longmapsto
   \sum_{A=1}^{N_{\text{PIC}}} \psi^*_A\,\delta(q-Q_A)\,\delta(p-P_A),
\]
from the weighted multi-particle phase space $(\mathbb{R}^d\times\mathbb{R}^{d}\times\mathbb{R})^{N_{\text{PIC}}}$ to \(\mathfrak g^*\). The Poisson bracket between functions $\mathcal{F}(\bm{\zeta}),\mathcal{G}(\bm{\zeta})$ on the weighted multi-particle phase space is
\begin{equation}
\{\mathcal{F},\mathcal{G}\}_{\mathrm{PIC}}
 =
 \sum_{A=1}^{N_{\text{PIC}}} \frac{1}{\psi^*_A}
 \left(\nabla_{Q_A}\mathcal{F}\cdot \nabla_{P_A}\mathcal{G} -\nabla_{Q_A}\mathcal{G}\cdot \nabla_{P_A}\mathcal{F}
 \right),
\label{eq:pic_bracket}
\end{equation}
while $\mathfrak{g}^*$ is equipped with the Lie-Poisson bracket \eqref{eq:lp-vp}.  Note that each of the weights $\psi^*_A$ is a Casimir for $\{\cdot,\cdot\}_{\text{PIC}}$. Composing the continuum Hamiltonian \eqref{eq:vp_hamiltonian} with \(\Gamma_0\) produces the discrete particle Hamiltonian
\begin{equation}
H_{\text{PIC}}(\bm{\zeta}) = \sum_{I=1}^{N_{\text{PIC}}} \psi^*_A\,\frac{|P_A|^2}{2m}
  + \frac{\epsilon_0}{2}
    \int |\nabla\varphi(q)|^2 dq,
\label{eq:pic_hamiltonian}
\end{equation}
where the potential is regarded as a function of $\bm{\zeta}$ as follows. Let $\mathcal{E}\subset H^1(\mathbb{R}^d)$ denote some subspace, which represents the chosen method of discretizing the electrostatic potential, such as an $H^1$ finite element basis.
The potential $\varphi$ is then defined as an element of $\mathcal{E}$ such that
\begin{align}
    \forall v\in\mathcal{E},\quad \epsilon_0\left\langle \nabla\varphi,\nabla v\right\rangle = \left\langle  e\bigg(\sum_{A=1}^{N_{\text{PIC}}} \psi^*_A\,S_h(q-Q_A)-n_0\bigg),v\right\rangle\label{eq:pic_poisson},
\end{align}
where $\langle\cdot,\cdot\rangle$ denotes the usual $L^2$ integration pairing on $\mathbb{R}^d$. When $\mathcal{E}=H^1(\mathbb{R}^d)$ and $S_h = \delta$ this definition of $\varphi$ leads to the usual Coulomb formula for the potential produced by each particle. Otherwise it effectively smooths the Coulomb potential, both by projection onto the subspace $\mathcal{E}$ and by mollifying the singular charge density using $S_h$. Many PIC implementations choose $S_h = \delta$. 
Hamilton’s equations associated with the bracket~\eqref{eq:pic_bracket} and Hamiltonian~\eqref{eq:pic_hamiltonian} reduce to
\begin{equation}
\dot{Q}_A = \frac{P_A}{m},
\qquad
\dot{P}_A = -\,e\,\nabla\varphi(Q_A),
\label{eq:pic_dynamics}
\end{equation}
which resemble the standard electrostatic $N$-body problem equations of motion, but involve the smoothed potential just described. By Guillemin--Sternberg collectivization \cite{guillemin1980moment}, if $\bm{\zeta}(t)$ is a solution of the Hamiltonian system defined by $\{\cdot,\cdot\}_{\text{PIC}}$ and $H_{\text{PIC}}$ then its image under the Poisson map $\Gamma_0$ is a (singular) exact solution of a modified VP system, where the electrostatic potential is defined according to 
\begin{align}
    \forall v\in\mathcal{E},\quad\epsilon_0\langle \nabla \varphi,\nabla v\rangle = \langle \rho,v\rangle,\quad \rho(q) = e\int \bigg(f(\overline{q},p)S_h(q-\overline{q})\,d\overline{q}\bigg)\,dp - e n_0(q).\label{eq:discrete_poisson}
\end{align}
Note that the shape function $S_h$ need not tend to a Dirac-$\delta$ in the limit $N_{\text{PIC}}\rightarrow \infty$.

\subsection{Scovel--Weinstein enhancement of PIC}
Although the Klimontovich-PIC ansatz leads to a practical simulation algorithm, it
contains no information about local phase-space gradients. Scovel--Weinstein overcome this limitation by equipping each marker particle with additional moment variables, enriching the PIC representation while respecting the Lie-Poisson structure of the VP system.

The Scovel--Weinstein construction begins with the decomposition
\[
\mathfrak g=\mathfrak b+\mathfrak s,
\]
and its basic Lie-theoretic consequences, which we now describe. Here $\mathfrak b$ contains the affine functions of $z=(q,p)$, $\mathfrak s$ contains functions with Taylor expansions about the origin that begin at quadratic order, and $+$ denotes the vector space direct sum. Both $\mathfrak{b}$ and $\mathfrak{s}$ are Lie subalgebras of $\mathfrak{g}$. Notably, $\mathfrak{b}$ is a finite-dimensional Lie algebra whose underlying Lie group is the famous Heisenberg group $\mathcal{B} = \mathbb{R}^d_Q\times\mathbb{R}^d_P\times\mathbb{R}_\Psi$ with group product
\[
(Q_1,P_1,\Psi_1)(Q_2,P_2,\Psi_2)
  =
\bigl(
Q_1+Q_2,\,
P_1+P_2,\,
\Psi_1+\Psi_2+\tfrac12(Q_1\cdot P_2-Q_2\cdot P_1)
\bigr).
\]
Here $(Q,P)$ represents a phase space translation, while $\Psi$ represents a classically-unobservable phase. The Lie algebra isomorphism relating the Heisenberg algebra to affine phase space functions is $(\delta q,\delta p,\delta \psi)\mapsto f_{(\delta q,\delta p,\delta\psi)} $, where $f_{(\delta q,\delta p,\delta \psi)}(q,p) =  \delta\psi - \delta p\cdot q + \delta q\cdot p$.

For each non-negative integer $k$ let $\mathcal I_k$ be the elements of $\mathfrak g$ with Taylor expansions about the origin that begin at order $k$. For $k\geq 2$ the space $\mathcal{I}_k\subset \mathfrak{s}$ is an ideal inside of $\mathfrak{s}$. (It is \emph{not} an ideal in $\mathfrak{g}$!) The vector space quotient
$\mathfrak s_k=\mathfrak s/\mathcal I_k$, $k\geq 2$,
which may be understood as phase space polynomials with degree greater than $1$ and less than $k$, therefore inherits a Lie algebra structure. (Our convention is that the zero function is the only phase space polynomial with degree greater than $1$ and less than $2$.) 
The quotient map
$
\pi_k:\mathfrak s\rightarrow\mathfrak s_k,
$
assigns to each function $s\in\mathfrak{s}$ its degree-$(k-1)$ Taylor polynomial at the origin. 
Its dual
$
\pi_k^*:\mathfrak s_k^*\rightarrow\mathfrak s^*\subset\mathfrak{g}^*,
$
realizes vectors of centered moments with degree greater than $1$ and less than $k$ as singular distribution functions concentrated at the origin.  More precisely, if $s^*_k = (M_2,\dots, M_{k-1})$ denotes a vector of centered moments in $\mathfrak{s}_k^*$ then $\pi_k^*(s^*_k) =\sum_{\ell=2}^{k-1}\frac{(-1)^\ell}{\ell!}  M_\ell^{\mu_1\dots \mu_\ell}\partial^\ell_{\mu_1\dots \mu_\ell}\delta$ is the corresponding singular distribution function. Here we reserve the symbol $\mu\in\{1,\dots,2d\}$ for indexing phase space coordinates and adopt the usual summation convention for repeated instances of $\mu$. It is straightforward to show that $\pi_k$ is a Lie algebra homomorphism and that $\pi_k^*$ is a Poisson map between Lie-Poisson spaces.

Next Scovel--Weinstein introduce the most novel technical element in their formalism: a mapping $\Gamma : (\mathcal{B}\times\mathfrak{b}^*)\times \mathfrak{s}^*\rightarrow\mathfrak{g}^*$ given by $\Gamma(B,b^*,s^*) = \text{Ad}^*_B(b^*+s^*)$, where $\text{Ad}^*$ denotes the (left) coadjoint action of $\mathcal{G}$, the infinite-dimensional Lie group underlying $\mathfrak{g}$, on $\mathfrak{g}^*$. For the VP system, the various definitions underlying this abstract expression unpack as follows. 

The variable $B=(Z,\Psi)\in \mathbb{R}^{2d}\times\mathbb{R}$ encodes the usual PIC single-particle state $Z = (Q,P)$ together with an unobservable phase $\Psi$. The dual variable $b^* = (z^*,\psi^*)\in\mathbb{R}^{2d}\times\mathbb{R}$, with $z^*=(q^*,p^*)$, encodes the centered phase space moments of degree $0$ and $1$, while $s^*$ denotes the part of the distribution function with vanishing centered moments of order $0$ and $1$. The value of $\Gamma(b,b^*,s^*)$ is given explicitly by
\begin{align*}
    \Gamma(b,b^*,s^*)(z) = \psi^*\delta(z-Z) +p^*\cdot \nabla_{q}\delta(z-Z) - q^{*}\cdot\nabla_{p}\delta(z-Z)+ s^*(z-Z).
\end{align*}
The domain space for $\Gamma$, $(\mathcal{B}\times\mathfrak{b}^*)\times \mathfrak{s}^*$, has a natural product Poisson manifold structure. The Poisson bracket on the factor $\mathcal{B}\times\mathfrak{b}^*$ follows from applying left trivialization to the cotangent bundle $T^*\mathcal{B}$, with its canonical Poisson bracket. The factor $\mathfrak{s}^*$ is equipped with its Lie-Poisson bracket. 

Scovel--Weinstein prove the remarkable fact that, relative to the indicated Poisson manifold structures, $\Gamma$ is a Poisson map. This is a general Lie-theoretic result, and therefore enjoys applicability outside the context of phase space moments in the VP system. For example it serves as the basis for the Hamiltonian fluid moment closures described in \cite{burby2023variable}.  

Finally, to completely specify their decorated particle model, Scovel--Weinstein synthesize the Poisson maps $\pi_k^*$ and $\Gamma$ to build a Poisson map from the $N_p$ decorated particle phase space $((\mathcal{B}\times\mathfrak{b}^*)\times \mathfrak{s}_k^*)^{N_p}\ni\tilde{\bm{\zeta}} = (\tilde{\zeta}_1,\dots,\tilde{\zeta}_{N_p})$, with its evident product Poisson manifold structure, into $\mathfrak{g}^*$. This map, which we denote $\bm{\Gamma}_k$, is given in terms of the single-decorated-particle Poisson maps
\begin{align*}
    \Gamma_k:(\mathcal{B}\times\mathfrak{b}^*)\times \mathfrak{s}_k^*\xrightarrow{\text{id}_{\mathcal{B}\times\mathfrak{b}^*}\times \pi_k^*} (\mathcal{B}\times\mathfrak{b}^*)\times \mathfrak{s}^*\xrightarrow{\Gamma}\mathfrak{g}^*,
\end{align*}
as 
\begin{align}
    \bm{\Gamma}_k(\tilde{\bm{\zeta}}) = \sum_{a=1}^{N_p}\Gamma_k(\tilde{\zeta}_a).\label{multi_particle_sw_poisson_map}
\end{align}

The single-decorated-particle phase space $(\mathcal{B}\times\mathfrak{b}^*)\times \mathfrak{s}_k^*$ comprises tuples of the form $\widetilde{\zeta} =(Z,\Psi,z^*,\psi^*,M_2,\dots,M_{k-1})$. The Hamiltonian on $N_p$-decorated-particle phase space space is 
\[
H_{\text{SWPIC}}(\tilde{\bm{\zeta}}) = H(\bm{\Gamma}_k(\tilde{\bm{\zeta}})),
\]
where $H$ is defined in \eqref{eq:vp_hamiltonian}. The equations of motion for the decorated particles are Hamilton's equations with Hamiltonian $H_{\text{SWPIC}}$ and the Poisson bracket on $((\mathcal{B}\times\mathfrak{b}^*)\times \mathfrak{s}_k^*)^{N_p}$ described above.

As in the discussion of standard PIC above, Guillemin--Sternberg collectivization implies that any solution $\widetilde{\bm{\zeta}}(t)$ of Hamilton's equations on $(\mathcal{B}\times\mathfrak{b}^*)\times \mathfrak{s}_k^*$ maps along $\bm{\Gamma}_k$ onto an exact solution of the modified VP system where the electrostatic potential is computed using \eqref{eq:discrete_poisson}. 

Finding the equations of motion for each decorated particle in the Scovel-Weinstein model requires explicit expressions for the Poisson map $\bm{\Gamma}_k$ and the Poisson bracket on $((\mathcal{B}\times\mathfrak{b}^*)\times \mathfrak{s}_k^*)^{N_p}$. The mapping $\bm{\Gamma}_k$ should be understood as specifying an ansatz for the distribution function generalizing the Klimontovich ansatz \eqref{eq:fpic}. These expressions are given as follows.

According to Eq.\,\eqref{multi_particle_sw_poisson_map} $\bm{\Gamma}_k$ is a sum of single-decorated-particle Poisson maps. The single-decorated-particle Poisson map $\Gamma_k$ is given explicitly by
\begin{align}
    \Gamma_k(\widetilde{\zeta}) &=\psi^*\,\delta_Z
  + p^*\!\cdot\nabla_q\delta_Z
  - q^*\!\cdot\nabla_p\delta_Z +\sum_{\ell=2}^{k-1}\frac{(-1)^\ell}{\ell!}
   (\mathbb M)^{i\mu_1\cdots \mu_\ell}\,
   \partial_{\mu_1\cdots \mu_\ell}^{\ell}\delta_Z,\label{sw_f_ansatz}
\end{align}
where $\delta_Z(z) := \delta(z-Z)$.
It recovers the usual PIC ansatz when $z^* = 0$, a monopole–dipole model at $k=2$, and higher-moment models for $k > 2$. The numerical results presented in this work apply to the monopole-dipole model only.

The Poisson bracket on $((\mathcal{B}\times\mathfrak{b}^*)\times \mathfrak{s}_k^*)^{N_p}$ is given by summing single-decorated-particle Poisson brackets on the single-decorated-particle phase spaces $(\mathcal{B}\times\mathfrak{b}^*)\times \mathfrak{s}_k^*$. Each single-decorated-particle bracket is given by $\{f,g\} = \{f,g\}_{\mathcal{B}\times\mathfrak{b}^*} + \{f,g\}_{\frak{s}_k^*}$. The bracket $\{f,g\}_{\mathcal{B}\times\mathfrak{b}^*}$ follows from applying left-trivialization to the canonical Poisson bracket on $T^*\mathcal{B}$, as described in \autoref{app:lefttrivialization} and \autoref{app:poissonbracket}.
The result is
\begin{align*}
\{f,g\}_{\mathcal{B}\times\mathfrak b^*}
&= \left(
\frac{\partial f}{\partial Z^\mu}\frac{\partial g}{\partial z_\mu^*}
-
\frac{\partial g}{\partial Z^\mu}\frac{\partial f}{\partial z_\mu^*}
\right)
+ \frac{\partial f}{\partial\Psi}\frac{\partial g}{\partial\psi^*}
- \frac{\partial g}{\partial\Psi}\frac{\partial f}{\partial\psi^*}\\
&\quad -\psi^*\left(
\frac{\partial f}{\partial q_i^*}\frac{\partial g}{\partial p^{*i}}
-
\frac{\partial g}{\partial q_i^*}\frac{\partial f}{\partial p^{*i}}
\right)\\
&\quad + \frac{1}{2}\!\left[
\frac{\partial f}{\partial\Psi}
\!\left(
Q_i\frac{\partial g}{\partial p^{*i}}
-
P_i\frac{\partial g}{\partial q^{*i}}
\right)
-
\frac{\partial g}{\partial\Psi}
\!\left(
Q_i\frac{\partial f}{\partial p^{*i}}
-
P_i\frac{\partial f}{\partial q^{*i}}
\right)
\right].
\end{align*}
Here we use the symbols $\mu\in\{1,\dots,2d\}$ and $i\in\{1,\dots, d\}$ for indexing components and adopt the Einstein summation convention. The bracket $\{f,g\}_{\frak{s}_k^*}$ is given by the Lie-Poisson formula
\(
\{f,g\}_{\frak{s}_k^*} = \langle s^*,[\delta f/\delta s^*,\delta g/\delta s^*]_k\rangle,
\)
where $[\cdot,\cdot]_k$ denotes the Lie bracket on the quotient Lie algebra $\mathfrak{s}_k$. When $k=2$ this bracket vanishes because $\mathfrak{s}_2 = \{0\}$. We do not display the explicit formulas for $k>2$ because the numerical results in the paper apply to $k=2$ only.

Notice that the distribution function $\Gamma_k(\widetilde{\zeta})$ in Eq.\,\eqref{sw_f_ansatz} does not depend on the phase variable $\Psi$. This implies that the Hamiltonian $H_{\text{SWPIC}}$ will not depend on any of the phase variables $\Psi_a$. The corresponding conserved ``momenta" are the $\psi_a^*$. As noted previously by Scovel--Weinstein, we are therefore justified in restricting attention to the reduced phase space for each particle,
\[\mathcal{P}_k = (\mathbb{R}^{d}\times\mathbb{R}^d\times \mathbb{R}^{d}\times\mathbb{R}^d\times\mathbb{R})\times \mathfrak{s}_k^*\ni (Q,P,q^*,p^*,\psi^*,M_2,\dots, M_{k-1})=\widetilde{\zeta}.\]
The Poisson bracket on $\mathcal{P}_k$ is given by replacing $\{\cdot,\cdot\}_{\mathcal{B}\times\mathfrak{b}^*}$ with
\begin{align}
\{f,g\}_{2}
&= \left(
\frac{\partial f}{\partial Z^\mu}\frac{\partial g}{\partial z_\mu^*}
-
\frac{\partial g}{\partial Z^\mu}\frac{\partial f}{\partial z_\mu^*}
\right)
- \psi^*\!\left(
\frac{\partial f}{\partial q_i^*}\frac{\partial g}{\partial p^{*i}}
-
\frac{\partial g}{\partial q_i^*}\frac{\partial f}{\partial p^{*i}}
\right),
\label{eq:sw_bracket}
\end{align}
while keeping the Lie--Poisson bracket on $\mathfrak{s}_k^*$ unchanged. The phase variable $\Psi$ will not appear in any of the analysis that follows. This justifies our reusing the symbol $\widetilde{\zeta}$ for points in the reduced phase space $\mathcal{P}_k$.

\subsection{Example dynamics of a single decorated particle}
\label{sec:singlelump-dynamics}

To make the structure of the ($k=2$) monopole--dipole Scovel--Weinstein model transparent, we temporarily restrict to the case with a single decorated particle ($N_p=1$) and zero electric charge $e = 0$. For illustrative purposes, we also introduce a prescribed potential $V(q)$. The Hamiltonian for the Vlasov equation is then
\begin{align*}
    H(f) = \int \bigg(\frac{|p|^2}{2m} + V(q)\bigg)f\,dq\,dp.
\end{align*}
Since $N_p=1$ the Scovel--Weinstein Hamiltonian is $H_{\text{SWPIC}} = H\circ \Gamma_2$.
Unpacking definitions, we find
\begin{align*}
    H_{\text{SWPIC}}(\widetilde{\zeta}) = &\int \bigg(\frac{|p|^2}{2m} + V(q)\bigg)\bigg(\psi^*\delta(q-Q)\delta(p-P)\nonumber\\
    &+p^*\cdot \nabla_q\delta(q-Q)\,\delta(p-P)-q^*\cdot\nabla_p\delta(p-P)\,\delta(q-Q)\bigg)\,dq\,dp\nonumber\\
    =&\psi^*\bigg(\frac{|P|^2}{2m} + V(Q)\bigg) - p^*\cdot\nabla V(Q) + \frac{1}{m}q^*\cdot P.
\end{align*}
Using this Hamiltonian and the Poisson bracket on $\mathcal{P}_2$ defined in~\eqref{eq:sw_bracket}$,$ the evolution equations for $\widetilde{\zeta} =(Q,P,q^*,p^*,\psi^*)\in \mathcal{P}_2$ become
\begin{subequations}
\label{eq:sw_prescribed_potential}
\begin{align}
\dot{Q} &= \frac{P}{m},  
&\dot{P} &= -\nabla V(Q),  & \dot{\psi}^* &= 0, \\
\dot{p}^* &= -\frac{q^*}{m},  
&\dot{q}^* &= p^*\!\cdot\nabla\nabla V(Q).
\end{align}
\end{subequations}
The variables $(Q,P)$ coincide with the phase space location at which the distribution function concentrates. They evolve according to the usual Hamilton equations. The dual variables $(q^*,p^*)$ obey the linearization of Hamilton's equations along the $(Q,P)$ trajectory. The linearized equations become more apparent after introducing the suggestive notation $p^* = \delta Q$ and $q^* = -\delta P$. (We will however not adopt this notation in what follows.) Each decorated particle with $k=2$ therefore captures the motion of a Vlasov characteristic and the infinitesimal deformation of nearby characteristics. This clarifies the sense in which the Scovel--Weinstein model incorporates gradient information when $k=2$.

\subsection{Explicit formulas for the Scovel--Weinstein model\label{the_sw_model}}
Again restrict to $k=2$. Restore the electric charge $e$ and set the external potential equal to zero $V=0$. The phase space for $N_p$ decorated particles is $\mathcal{P}_2^{N_p}$. The Hamiltonian is
\begin{align}
    H_{\text{SWPIC}}(\bm{\widetilde{\zeta}}) = \sum_{a=1}^{N_p}\left(\frac{\psi_a^*}{2m}|P_a|^2 + \frac{1}{m}q_a^*\cdot P_a\right) + \frac12\epsilon_0\,\int |\nabla\varphi(q)|^2\,dq,\label{eq:sw_hamiltonian}
\end{align}
where the electrostatic potential $\varphi$ is regarded as a functional of $\widetilde{\bm{\zeta}}$ as follows. Let $\mathcal{E}\subset H^1(\mathbb{R}^d)$ denote some subspace. 
The potential $\varphi$ is then defined as an element of $\mathcal{E}$ such that
\begin{align}
    \forall v\in\mathcal{E},\quad \epsilon_0\left\langle \nabla\varphi,\nabla v\right\rangle &= \left\langle  e\bigg(\sum_{a=1}^{N_{p}} \psi^*_a\,S_h(q-Q_a)-n_0\bigg),v\right\rangle\nonumber\\
    &+\left\langle  e\sum_{a=1}^{N_{p}} p^*_a\cdot\nabla S_h(q-Q_a),v\right\rangle\label{eq:swpic_poisson},
\end{align}
where $\langle\cdot,\cdot\rangle$ denotes the usual $L^2$ integration pairing on $\mathbb{R}^d$. When $p^*_a=0$ for all $a$ this recovers the definition of the potential in traditional PIC. Otherwise it adds a dipole correction to the charge density, consistent with a picture of decorated particles as pairs of closely-spaced marker particles. 

Hamilton’s equations associated with the bracket~\eqref{eq:sw_bracket} and Hamiltonian~\eqref{eq:sw_hamiltonian} reduce to
\begin{subequations}
\label{eq:swpic_dynamics}
\begin{align}
\dot{Q}_a &= \frac{P_a}{m},
&\dot{P}_a &= -e\int S_h(q-Q_a)\nabla\varphi(q)\,dq, & \dot{\psi}_a^* & = 0,\\
\dot{p}_a^* &= -\frac{q_a^*}{m},\qquad & \dot{q}_a^* &= -e\int p_a^*\cdot\nabla S_h(q-Q_a)\,\nabla\varphi(q)\,dq.
\end{align}
\end{subequations}
For each $a\in \{1,\dots, N_p\}$, the variables $(Q_a,P_a)$ evolve according to the usual Hamilton equations. The dual variables $(q^*_a,p^*_a)$ obey the linearization of Hamilton's equations along the $(Q_a,P_a)$ trajectory. By Guillemin--Sternberg collectivization, if $\widetilde{\bm{\zeta}}(t)$ is a solution of the Hamiltonian system defined by $\{\cdot,\cdot\}_{2}$ and $H_{\text{SWPIC}}$ then its image under the Poisson map $\bm{\Gamma}_2$ is a (singular) exact solution of the modified VP system, where \eqref{eq:discrete_poisson} is used to define the potential instead of \eqref{eq:poissond}.

So far we have described the Scovel--Weinstein model equations for an unbounded spatial domain $\mathcal{Q}=\mathbb{R}^d$. When $\mathcal{Q} = \mathbb{T}^d$ very little changes. The single-decorated-particle phase space becomes
\begin{align*}
    \mathcal{P}_k = (\mathbb{T}^d\times\mathbb{R}^d\times\mathbb{R}^d\times\mathbb{R}^d\times\mathbb{R})\times\mathfrak{s}_k^*\ni (Q,P,q^*,p^*,\psi^*,M_2,\dots, M_{k-1})=\widetilde{\zeta}.
\end{align*}
The Poisson bracket \eqref{eq:sw_bracket} is still a valid Poisson bracket when $Q\in\mathbb{T}^d$ instead of $\mathbb{R}^d$ because locally these two spaces are indistinguishable. The Hamiltonian is still given by \eqref{eq:sw_hamiltonian}. The potential is still computed using \eqref{eq:swpic_poisson}, but now $\mathcal{E}\subset H^1(\mathbb{T}^d)$ instead of $H^1(\mathbb{R}^d)$. The decorated particle equations of motion are still \eqref{eq:swpic_dynamics}. Although the derivation of the Scovel--Weinstein model used the vector space structure of $\mathcal{Q} = \mathbb{R}^d$ in an essential way at intermediate steps, after eliminating the unobservable phase $\Psi$ the spaces $\mathcal{Q} = \mathbb{R}^d$ and $\mathcal{Q} = \mathbb{T}^d$ become essentially interchangable.

\section{Initialization via Particle Moment Expansion}
\label{sec:init}
Having established the Scovel--Weinstein model, we now describe how to initialize its variables starting from a large ensemble of $N_{\text{PIC}}$ marker particles $\{(\overline{Q}_A,\overline{P}_A,\overline{\psi}_A^*)\}_{A=1,\dots,N_{\text{PIC}}}$.  
The phase space for $N_p$ decorated particles with $k=2$ is \[\mathcal{P}_2^{N_p}\ni (Q_1,P_1,q^*_1,p^*_1,\psi^*_1,\dots,Q_{N_p},P_{N_p},q^*_{N_p},p^*_{N_p},\psi^*_{N_p}).\]
An initial state for the Scovel-Weinstein model requires specifying the decorated particle variables
\[
(Q_a, P_a,  q_a^*, p_a^*,\psi_a^*) ,
\qquad a=1,\dots,N_p.
\]
We aim to compress the marker particle variables into a smaller collection of decorated particles that preserves distributional fidelity while reducing computational cost.  

Our compression strategy depends on whether $\mathcal{Q} = \mathbb{R}^d$ or $\mathcal{Q} = \mathbb{T}^d$. We will describe the method for $\mathcal{Q} = \mathbb{R}^d$ first, since it is more intuitive. Then we will describe the method for $\mathcal{Q} = \mathbb{T}^d$, which is slightly more technical.

\subsection{Initialization with $\mathcal{Q} = \mathbb{R}^d$}
To construct the reduced initialization, we partition the marker particle ensemble into 
$N_p \ll N_{\text{PIC}}$ clusters $S_{a}$ by applying the $k$-means algorithm to the phase-space coordinates $(\overline{Q}_A,\overline{P}_A)$.
Each cluster represents a localized aggregate of marker particles with total weight
\begin{equation}
\label{eq:psi-star}
\psi^*_a = \sum_{A \in S_a} \overline{\psi}^*_A .
\end{equation}
Here $S_a$ denotes the index set of marker particles belonging to cluster $S_a$ and $\overline{\psi}^*_A$ denote the marker particle weights. We find $A_0\in S_a$ corresponding to the particle $(\overline{Q}_{A_0},\overline{P}_{A_0})$ closest to the true mean phase space location of the cluster
\begin{align*}
\overline{Z}_a = \frac{\sum_{A\in S_a}\overline{\psi}^*_A\overline{Z}_A}{\sum_{A\in S_a}\overline{\psi}^*_A}.
\end{align*}
We then define $(Q_a,P_a) = (\overline{Q}_{A_0},\overline{P}_{A_0})$.  This coarse-graining step preserves the large-scale geometry of the underlying distribution.

To encode derivative information of the distribution in the decorated particles, we approximate the empirical distribution of each cluster using a first-order Taylor expansion about $(Q_a,P_a)$. This leads to the approximation for the cluster's empirical marker particle distribution given by
\begin{align*}
    \sum_{A\in S_a}\overline{\psi}_A^*\delta(q-\overline{Q}_A)\delta(p-\overline{P}_A)&\approx \psi_a^*\delta(q-Q_a)\delta(p-P_a)\\
    &+ p^*_a\cdot\nabla_q\left[\delta(q-Q_a)\delta(p-P_a)\right] - q^*_a\cdot\nabla_p\left[\delta(q-Q_a)\delta(p-P_a)\right],
\end{align*}
where
\begin{equation}
\label{eq:moment-def}
p^*_a = \sum_{A \in S_a} \overline{\psi}^*_A \; c(Q_a,\overline{Q}_A), 
\qquad
q^*_a = -\sum_{A \in S_a} \overline{\psi}^*_A \; (P_a - \overline{P}_A).
\end{equation}
Here $c(Q_a,\overline{Q}_A) := Q_a - \overline{Q}_A$.

These quantities represent the spatial and momentum dipole moments carried by each decorated particle and provide the initial conditions for the corresponding dual variables $(q_a^*,p_a^*)$ in the Scovel--Weinstein phase space. In particular, $p_a^*$ corresponds to the spatial dipole moment, while $q_a^*$ corresponds to the momentum dipole moment. Each decorated particle is thus initialized by the tuple $(Q_a, P_a, q_a^*, p_a^*, \psi_a^*)$, which serves as the initial state for integrating the finite-dimensional Hamiltonian system introduced in the preceding sections. For large-scale simulations, MiniBatch $k$-means may be used in place of standard $k$-means to obtain nearly identical results at substantially lower computational cost. The complete initialization procedure is summarized in Algorithm~\ref{alg:lumping_procedure}.

\begin{algorithm}[htp]
\caption{Decorated Particle Initialization}
\label{alg:lumping_procedure}
\begin{algorithmic}[1]
\REQUIRE PIC marker positions $\overline{Q}_A$, momenta $\overline{P}_A$,
weights $\overline{\psi}_A^*$, number of clusters $N_p$, domain length $L$
\STATE Group particles in $(q,p)$ into $N_p$ clusters using $k$-means
\STATE Initialize an empty list of $N_p$ decorated particles
\FOR{$a = 1$ to $N_p$}
  \STATE Let $S_a$ be the set of particles assigned to cluster $a$
  \IF{$S_a = \emptyset$}
    \STATE \textbf{continue}
  \ENDIF
  \STATE Find $A_0\in S_a$ for central particle
  \STATE $\displaystyle Q_a = \overline{Q}_{A_0}$; $\displaystyle P_a =\overline{P}_{A_0}$; $\displaystyle \psi_a^* = \sum_{A\in S_a} \overline{\psi}_A^*$; 
  \STATE $\displaystyle p_a^* =
  \sum_{A\in S_a} \overline{\psi}_A^*\,c(Q_a,\overline{Q}_A)$;
  $\displaystyle q_a^* =
  \sum_{A\in S_a}
  -\overline{\psi}_A^*(P_a-\overline{P}_A)$
  \STATE Append $(Q_a,P_a,q_a^*,p_a^*,\psi_a^*)$
\ENDFOR
\RETURN all appended decorated particles
\end{algorithmic}
\end{algorithm}

Note that we do not use the true mean to define the center of the cluster. The reason for this can be understood by considering the simple case of a two-particle cluster. If $(Q_a,P_a) = \overline{Z}_a$ then the above formulas imply $q_a^* = p_a^* = 0$. This implies that the Scovel--Weinstein ansatz collapses to the ordinary PIC ansatz. There is no way to recover more than the mean phase space locations of the particle pair. However, by setting $(Q_a,P_a) = (\overline{Q}_1,\overline{P}_1)$ instead the non-zero values of $q_a^*,p_a^*$ can be used to approximately recover the locations of both particles. The first particle location is of course $(\overline{Q}_1,\overline{P}_1) = (Q_a,P_a)$. Using the observation from Section \ref{sec:singlelump-dynamics} that $(q^*,p^*)\approx (-\delta P,\delta Q)$, where $(\delta Q,\delta P)$ represents a displacement vector, the second particle is located approximately at $(\overline{Q}_2,\overline{P}_2)\approx (Q_a+p^*_a,P_a-q^*_a)$. In other words, displacing the central particle from the true mean allows the ($k=2$) Scovel-Weinstein ansatz to encode more information about the underlying particle distribution than it would with the central particle placed at the true mean. That said, it is also important that the central particle is not displaced too far from the true mean in order to minimize errors in higher moments. The numerical experiments we describe below suggest that our choice of colocating the central particle with the most central PIC particle strikes a good balance between these competing priorities. It will be interesting to analyze optimal placement of the central particle in future work.

\subsection{Initialization with $\mathcal{Q} = \mathbb{T}^d$} The initialization proceeds in nearly the same manner. The crucial difference occurs when attempting to approximate the cluster's empirical distribution with a Scovel--Weinstein ansatz. In the unbounded case Taylor expansion leads to simple expressions for the $(q^*_a,p^*_a)$. But Taylor expanding the empirical distribution in $q$ produces a non-periodic function. Therefore Taylor expanding in $q$ should be avoided when $\mathcal{Q} = \mathbb{T}^d$. To sidestep this issue we use a first-order Fourier series expansion as follows. Let 
\begin{align*}
    f^i(q) = \sin\left(\frac{2\pi}{L}[q^i-Q_a^i]\right),\quad i=1,\dots, d.
\end{align*}
To determine the coefficients $q^*_a$ and $p^*_a$ we impose the following requirements
\begin{align*}
    &\int f^i(q)\,\bigg(\sum_{A\in S_a}\overline{\psi}_A^*\,\delta(q-\overline{Q}_A)\delta(p-\overline{P}_A)\bigg)\,dq\,dp\\
    = & \int f^i(q)\bigg(\psi^*_a\delta(q-{Q}_a)\delta(p-{P}_a)\\
    + & \; p_a^*\cdot\nabla_q[\delta(q-{Q}_a)\delta(p-{P}_a)]
    - q_a^*\cdot\nabla_p[\delta(q-{Q}_a)\delta(p-{P}_a)]\bigg)\,dq\,dp,\quad i=1,\dots, d,
\end{align*}
and
\begin{align*}
    &\int p_i\,\bigg(\sum_{A\in S_a}\overline{\psi}_A^*\,\delta(q-\overline{Q}_A)\delta(p-\overline{P}_A)\bigg)\,dq\,dp\nonumber\\ =& \int p_i\bigg(\psi^*_a\delta(q-{Q}_a)\delta(p-{P}_a)\nonumber\\
    +&\; p_a^*\cdot\nabla_q[\delta(q-{Q}_a)\delta(p-{P}_a)]- q_a^*\cdot\nabla_p[\delta(q-{Q}_a)\delta(p-{P}_a)]\bigg)\,dq\,dp,\quad i=1,\dots, d.
\end{align*}
These requirements result in the expressions \eqref{eq:moment-def} for $q_a^*,p_a^*$, but with $c$ redifined according to
\begin{align*}
    c(Q_a,\overline{Q}_A) := \frac{L}{2\pi}\sin\bigg(\frac{2\pi}{L}[Q^i_a - \overline{Q}_A^i]\bigg)\,\bm{e}_i.
\end{align*}
Note that $c(Q_a,\overline{Q}_A)$ is manifestly periodic. Also note that for a very narrow cluster we have $c(Q_a,\overline{Q}_A)\approx Q_a - \overline{Q}_A$. This indicates matching Fourier moments in a periodic domain is a good substitute for matching ordinary moments in an unbounded domain.
In summary, Algorithm~\ref{alg:lumping_procedure} is applicable for $\mathcal{Q} = \mathbb{T}^d$ with the updated $c(Q_a,\overline{Q}_A)$.

\section{Discretization in space and time}
\label{sec:discrete-hamiltonian} 
A practical implementation of the Scovel--Weinstein model requires specifying the subspace $\mathcal{E}\subset H^1(Q)$ in~\eqref{eq:swpic_poisson} and a time integrator. 
The spatial and temporal discretizations we use in our numerical experiments are outlined below.

For the remainder of the paper we restrict attention to the normalized 1D1V VP system
with $e = m = \epsilon_0 = 1$ on a periodic domain $[0, L)$ (equivalently, on the torus
$\mathbb{T}$), in which the electrostatic potential satisfies the Poisson equation
\begin{equation}
-\varphi''(q) \;=\; \rho(q) - n_0,
\qquad \int_0^L \bigl(\rho(q) - n_0\bigr)\, dq \;=\; 0.
\label{eq:poisson}
\end{equation}
The charge density is given by
\begin{equation}
\rho(q)
\;=\;
\int_{\mathbb{R}} f(q, p, t)\, dp
\;=\;
\sum_{a=1}^{N_p}
\Bigl(
\psi_a^{\ast}\, \delta(q - Q_a)
\;+\;
p_a^{\ast}\delta'(q - Q_a)
\Bigr),
\label{eq:rho}
\end{equation}
with $N_p$ decorated particles located at positions $Q_a \in [0, L)$. The
compatibility condition in \eqref{eq:poisson} then imposes the neutrality
constraint
\begin{equation}
n_0\, L \;=\; \sum_{a=1}^{N_p} \psi_a^{\ast}.
\label{eq:neutrality}
\end{equation}
For simplicity we assume $n_0$ to be constant; both the numerical scheme and its
analysis extend with only cosmetic changes to a $q$-dependent $n_0$.

To approximate \eqref{eq:poisson} we adopt continuous Galerkin (CG) finite elements:
this choice is natural in our PIC setting, where the same Lagrange basis serves both
the standard PIC scheme and the proposed SWPIC scheme, simplifying their direct
comparison. Let $\mathcal{T}_h = \{0 = q_0 < q_1 < \cdots < q_{N-1} < q_N = L\}$ be a
quasi-uniform partition of $[0, L)$ with maximum element size $h$, and let $\mathcal{E} = V_h^{(k)}
\subset H^1(\mathbb{T})$ denote the standard Lagrange finite element space of degree
$k$, augmented with the zero-mean constraint $\int_0^L v_h\,dq = 0$. The
Galerkin approximation $\varphi_h \in V_h^{(k)}$ of \eqref{eq:poisson} solves
\begin{equation}
\int_0^L \varphi_h'(q)\, v_h'(q)\, dq
\;=\; \sum_{a=1}^{N_p}\Bigl[\psi_a^{\ast}\, v_h(Q_a) \;-\; p_a^{\ast}\, v_h'(Q_a)\Bigr],
\qquad \forall\, v_h \in V_h^{(k)},
\label{eq:galerkin}
\end{equation}
where the right-hand side is well defined provided no particle location $Q_a$ is a
mesh node, and the background contribution in \eqref{eq:galerkin} vanishes by the
zero-mean constraint $\int_0^L v_h\,dq = 0$.

Expanding $\varphi_h(q) = \sum_{j=0}^{N-1} \Phi_j\, \phi_j(q)$ in the Lagrange nodal
basis $\{\phi_j\}_{j=0}^{N-1}$ of $V_h^{(k)}$, the Galerkin
system~\eqref{eq:galerkin} becomes the linear system
\begin{equation}
A\,\Phi \;=\; b, \qquad
A_{ij} \;=\; \int_0^L \phi_i'(q)\,\phi_j'(q)\, dq,
\label{eq:lin-sys}
\end{equation}
where $A \in \mathbb{R}^{N \times N}$ is the symmetric positive semidefinite periodic
stiffness matrix with one-dimensional kernel $\mathrm{span}\{(1, \dots, 1)\}$. The right-hand side
vector $b$ assembles the monopole and dipole contributions at the particle locations,
\begin{equation}
b_j \;=\; \sum_{a=1}^{N_p} \Bigl[ \psi_a^{\ast}\, \phi_j(Q_a)
\;-\; p_a^{\ast}\, \phi_j'(Q_a) \Bigr],
\label{eq:rhs-assembly}
\end{equation}
in which $\phi_j(Q_a)$ and $\phi_j'(Q_a)$ are evaluated at the particle position $Q_a$.
The linear system \eqref{eq:lin-sys} is solved subject to the zero-mean gauge
$\mathbf{1}^{\!\top}\!\Phi = 0$.

Substituting the finite element expansion $\varphi_h(q) = \sum_{j=0}^{N-1} \Phi_j\,
\phi_j(q)$ into~\eqref{eq:sw_hamiltonian} yields the spatially-discretized Scovel-Weinstein
Hamiltonian
\begin{equation}
\label{eq:Hh}
H_h
\;=\;
\sum_{a=1}^{N_p}
\left(
\frac{\psi_a^*}{2m}\, P_a^{\,2}
\;+\;
\frac{1}{m}\, q_a^*\, P_a
\right)
\;+\;
\tfrac{1}{2}\,\Phi^{\!\top} A\,\Phi.
\end{equation}
In the 1D1V setting, the noncanonical Poisson bracket on the $k=2$ single-decorated-particle phase-space reads
\begin{align}
\{f,g\}_2
&= \frac{\partial f}{\partial Q}\frac{\partial g}{\partial q^*}
 - \frac{\partial g}{\partial Q}\frac{\partial f}{\partial q^*}
 + \frac{\partial f}{\partial P}\frac{\partial g}{\partial p^*}
 - \frac{\partial g}{\partial P}\frac{\partial f}{\partial p^*}
  \notag\\
&\quad
-\,\psi^*\!\left(
   \frac{\partial f}{\partial q^*}\frac{\partial g}{\partial p^*}
 - \frac{\partial g}{\partial q^*}\frac{\partial f}{\partial p^*}\right).
\label{eq:lp1d}
\end{align}
The Hamilton's equations generated by \eqref{eq:Hh}
under the Poisson bracket \eqref{eq:lp1d} give
\begin{subequations}
\label{eq:sw-evolution}
\begin{align}
\dot{Q}_a &= \frac{P_a}{m},
& \dot{P}_a &= -\varphi_h'(Q_a),
& \dot{\psi}_a^* &= 0, \\
\dot{p}_a^* &= -\,\frac{q_a^*}{m},
& \dot{q}_a^* &= p_a^*\,\varphi_h''(Q_a).
\end{align}
\end{subequations}
In our implementation the first derivative $ \varphi_h'(Q_a)$ is evaluated directly from the Lagrange
basis, while the second derivative $ \varphi_h''(Q_a)$ is reconstructed through an auxiliary projection
step.

Finally, the semi-discrete decorated particle system~\eqref{eq:sw-evolution}, coupled to the
Poisson solve~\eqref{eq:galerkin} at each time step, is advanced in time by a leapfrog
integrator, exactly as in standard PIC
implementations~\cite{birdsall2018plasma}.

\section{Numerical analysis}
\label{sec:numerical-analysis}
We now analyze the convergence of the CG approximation
\eqref{eq:galerkin}--\eqref{eq:rhs-assembly} to the exact solution of
\eqref{eq:poisson} under mesh refinement. 
The singularity in the charge density
\eqref{eq:rho} makes the exact potential only
piecewise smooth. This loss of regularity degrades the standard $L^2$ convergence rate of CG
elements, as analyzed below.

For a given function $g$, let
$
[g]_{Q} \;:=\; g(Q^{+}) - g(Q^{-})
$
denote its jump across $Q$. Since \eqref{eq:galerkin} is
linear in the particle data, it suffices to analyze the single-source case
$N_p = 1$; the multi-particle estimate then follows (see
Remark~\ref{rem:multi-particle}). The exact solution of the single-source problem is given in the following theorem.
\begin{theorem}
\label{thm:periodic-poisson-1d}
Let $L > 0$, $Q \in [0, L)$, and $\psi^{\ast}, p^{\ast} \in \mathbb{R}$. Consider
$L$-periodic functions on $\mathbb{T}$ satisfying
\begin{equation}
-\varphi''(q) \;=\; \psi^{\ast}\, \delta(q - Q) \;+\; p^{\ast}\, \delta'(q - Q) \;-\; n_0,
\qquad q \in \mathbb{T}, \qquad \int_0^L \varphi\, dq \;=\; 0.
\label{eq:pde}
\end{equation}
Then \eqref{eq:pde} is solvable if and only if $n_0 L = {\psi^{\ast}}$,
in which case its unique solution is
\begin{equation}
\varphi(q) \;=\;
\psi^{\ast}\!\left[\frac{r(q)^{2}}{2L} - \frac{|r(q)|}{2} + \frac{L}{12}\right]
\;+\;
p^{\ast}\!\left[\frac{r(q)}{L} - \frac{1}{2}\, \operatorname{sgn} r(q)\right],
\label{eq:solution}
\end{equation}
where $r(q)$ denotes the unique value in $(-L/2,\, L/2]$ satisfying $r(q) \equiv q - Q \pmod{L}$. The solution satisfies the interface jump relations
\begin{equation}
[\varphi]_{Q} \;=\; -p^{\ast}, \qquad [\varphi']_{Q} \;=\; -\psi^{\ast}.
\label{eq:jumps}
\end{equation}
\end{theorem}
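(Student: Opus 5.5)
We interpret \eqref{eq:pde} in the sense of periodic distributions on $\mathbb{T}$ and argue in three steps: a solvability condition, uniqueness, and verification of the candidate \eqref{eq:solution}, from which the jump relations \eqref{eq:jumps} follow.

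\textbf{Solvability and uniqueness.} Pair \eqref{eq:pde} with the constant test function $1$ on $\mathbb{T}$. The left side gives $\langle -\varphi'',1\rangle = \langle \varphi, -1''\rangle = 0$. On the right, $\langle \delta',1\rangle = -\langle \delta, 1'\rangle = 0$, so the right side pairs to $\psi^* - n_0L$. Hence $n_0L=\psi^*$ is necessary. For uniqueness, the difference of two solutions is a periodic distribution $w$ with $w''=0$. Using Fourier series, $-(2\pi k/L)^2\hat w_k = 0$, so $\hat w_k=0$ for $k\neq 0$ and $w$ is constant. The constraint $\int_0^L w\,dq=0$ then forces $w=0$.

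\textbf{Sufficiency by verification.} Set $\varphi=\psi^*G_0+p^*G_1$, with $G_0=\frac{r^2}{2L}-\frac{|r|}{2}+\frac{L}{12}$ and $G_1=\frac{r}{L}-\frac12\operatorname{sgn} r$. Both are $L$-periodic functions of $r(q)$. We check the following properties.

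\begin{enumerate}
\item $G_0$ is continuous on $\mathbb{T}$. At $r=0$ this is clear. At $r=\pm L/2$ it holds because $G_0$ is even in $r$.
\item $G_1$ is continuous at $r=\pm L/2$, since $\frac{\pm L/2}{L}\mp\frac12=0$. At $r=0$ its jump is $-\frac12-\frac12=-1$.
\item Away from $r=0$, we have $G_0''=1/L$ and $G_1''=0$.
\item $G_0'=\frac rL-\frac12\operatorname{sgn} r$ is continuous at $\pm L/2$ and jumps by $-1$ at $0$.
\end{enumerate}

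We then apply the distributional rule $(g)' = g'_{\text{classical}} + [g]_Q\,\delta_Q$ on the torus. This gives $G_0''=\frac1L-\delta_Q$. It also gives $G_1'=\frac1L-\delta_Q$, hence $G_1''=-\delta'_Q$.

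Therefore
\[
-\varphi''=\psi^*\delta_Q+p^*\delta'_Q-\frac{\psi^*}{L},
\]
and this equals the right side of \eqref{eq:pde} exactly when $n_0=\psi^*/L$.

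For the mean-zero constraint, note $\int_{-L/2}^{L/2}G_1\,dr=0$ because $G_1$ is odd. For $G_0$,
\[
\int_{-L/2}^{L/2} G_0\,dr = \frac{L^2}{24}-\frac{L^2}{8}+\frac{L^2}{12}=0 .
\]
The jumps \eqref{eq:jumps} follow from the computations above. First, $[\varphi]_Q=p^*[G_1]_Q=-p^*$, since $G_0$ is continuous. Second, $[\varphi']_Q=\psi^*[G_0']_Q+p^*[G_1']_Q=-\psi^*$, because $G_1'=1/L$ is continuous away from $Q$.

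\textbf{Main obstacle.} The only delicate point is bookkeeping the distributional derivatives correctly on the torus. In particular, one must confirm that the kink and jump at $r=\pm L/2$ vanish, so that no spurious delta appears at the antipodal point. One must also get the sign convention $\langle\delta',v\rangle=-v'(Q)$ right, consistently with \eqref{eq:galerkin}.
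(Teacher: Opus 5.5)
Your proof is correct and complete. The paper gives no proof of this theorem: it is stated and then used, with the explicit formula, the jump $[\varphi_D]_Q=-p^{\ast}$ and the slope $p^{\ast}/L$ away from $Q$ all entering the proof of Theorem~\ref{thm:strongL2}. So there is no argument of the authors' to compare against.

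Your three steps supply what is missing: necessity by pairing with the constant $1$, uniqueness from the Fourier modes of $w''=0$, and existence by the distributional jump rule plus the two mean-value integrals. The delicate points you flag are the right ones. Continuity of $G_0$, $G_0'$ and $G_1$ at the antipodal point $r=\pm L/2$ is what rules out spurious $\delta$'s there. The convention $\langle\delta_Q',v\rangle=-v'(Q)$ matches the right-hand side of \eqref{eq:galerkin}.

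Two optional remarks follow.

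\emph{A shortcut.} $G_0$ is continuous and its classical derivative is $G_1$, so $G_1=G_0'$ distributionally; the dipole kernel is just the derivative of the monopole kernel. Hence $-G_1''=\partial_q(-G_0'')=\delta_Q'$ and $\int_0^L G_1\,dq=0$ follow from the monopole case with no separate check.

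\emph{A constructive alternative.} You could solve in Fourier series instead:
\[
\hat\varphi_k=\frac{L}{4\pi^2k^2}\Bigl(\psi^{\ast}+\frac{2\pi i k}{L}\,p^{\ast}\Bigr)e^{-2\pi i kQ/L}\quad (k\neq 0),\qquad \hat\varphi_0=0 .
\]
The $k=0$ mode gives the condition $n_0L=\psi^{\ast}$, so necessity, uniqueness and existence come out at once. Summing the series identifies $G_0=\frac{L}{2}B_2(\{(q-Q)/L\})$ and $G_1=B_1(\{(q-Q)/L\})$, where $B_1(x)=x-\tfrac12$ and $B_2(x)=x^2-x+\tfrac16$ are Bernoulli polynomials. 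This route derives the formula rather than verifying a guessed one, at the cost of summing the series. Your verification is the more elementary argument once the answer is known.
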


The corresponding single-source Galerkin problem is the $N_p = 1$ specialization of
\eqref{eq:galerkin}, namely
\[
\int_0^L \varphi_h'(q)\, v_h'(q)\, dq
\;=\; \psi^{\ast}\, v_h(Q) \;-\; p^{\ast}\, v_h'(Q),
\qquad \forall\, v_h \in V_h^{(k)}.
\]
The main convergence result for the discretization
\eqref{eq:galerkin}--\eqref{eq:rhs-assembly} reads as follows.
\begin{theorem}[$L^2$ convergence]
\label{thm:strongL2}
Given $p^{\ast} \neq 0$, let $\varphi$ be the exact solution \eqref{eq:solution} and 
$\varphi_h \in V_h^{(k)}$ be its CG approximation defined by
\eqref{eq:galerkin} on a quasi-uniform mesh of size $h$ with $Q$ not a mesh node. Then
there exists a constant $C> 0$, independent of $h$, such that
\begin{equation}
\|\varphi - \varphi_h\|_{L^2} \;\leq\;
C\, h^{1/2}\, \bigl(|\psi^{\ast}| + |p^{\ast}|\bigr).
\label{eq:strong-L2-rate}
\end{equation}
The rate $h^{1/2}$ is sharp: it coincides with the best $L^2$ approximation of a step
function by CG bases and cannot be improved by raising the polynomial degree $k$.
\end{theorem}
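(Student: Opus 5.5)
Since the Galerkin problem is linear in the data, I split $\varphi=\psi^{\ast}\varphi_\psi+p^{\ast}\varphi_p$ and $\varphi_h=\psi^{\ast}\varphi_{\psi,h}+p^{\ast}\varphi_{p,h}$, with the monopole part $\varphi_\psi$ (continuous, with a kink at $Q$) and the dipole part $\varphi_p$ (with a unit jump at $Q$) given by Theorem~\ref{thm:periodic-poisson-1d}. Each part is treated separately, and the triangle inequality then gives the $|\psi^{\ast}|+|p^{\ast}|$ factor. The monopole part is the easy one: $\varphi_\psi\in H^1$, so C\'ea's lemma in the $H^1$-seminorm applies on the zero-mean space. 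Since $\varphi_\psi$ is piecewise smooth with a single kink, interpolation gives an energy error of order $h^{1/2}$. The Aubin--Nitsche duality argument then gives $\|\varphi_\psi-\varphi_{\psi,h}\|_{L^2}\lesssim h^{1/2}\cdot h\le C h^{1/2}$. (One could even use directly that 1D Galerkin for a point source is nodally exact for $k=1$.)

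For the dipole part, $\varphi_p\notin H^1$, so C\'ea's lemma is unavailable and I will argue by duality instead. Let $e=\varphi_p-\varphi_{p,h}$ and let $w$ solve $-w''=e-\bar e$ with zero mean, periodic. Then
\[
\|e\|_{L^2}^2=(e,-w'')=(\varphi_p,-w'')-(\varphi_{p,h},-w'').
\]
The first term is $-w'(Q)$ by the distributional equation for $\varphi_p$. The second term equals $-\int\varphi_{p,h}'w'$ (the discrete function is $H^1$), and by Galerkin orthogonality this is $-\int \varphi_{p,h}'w_h'$ for the Ritz projection $w_h$ of $w$, which in turn equals $-w_h'(Q)$ by the discrete equation. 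Hence
\[
\|e\|_{L^2}^2=|w'(Q)-w_h'(Q)|.
\]
The key step is therefore a pointwise derivative error for the Ritz projection, with $w\in H^2$ and $\|w\|_{H^2}\lesssim\|e\|_{L^2}$. In 1D, $w'-w_h'$ is the $L^2$-projection error of $w'\in H^1$ onto discontinuous piecewise polynomials of degree $k-1$ (a mean-constrained space). By a local estimate combined with a Sobolev bound on a single element, $|w'(Q)-w_h'(Q)|\lesssim h^{1/2}|w'|_{H^1(K)}+$ a global term. Since $Q$ lies in the interior of an element $K$, on that element we get $\|w'-\Pi w'\|_{L^\infty(K)}\lesssim h^{1/2}\|w''\|_{L^2(K)}$. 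This gives $\|e\|_{L^2}^2\lesssim h^{1/2}\|e\|_{L^2}$, and hence the $h^{1/2}$ bound. The main obstacle is justifying this local $L^\infty$ derivative estimate for the 1D Ritz projection on a periodic quasi-uniform mesh. For $k=1$ it follows from the fact that the 1D Ritz projection coincides with interpolation, so $w_h'$ is the element average of $w'$. For general $k$ I would use the same elementwise characterization: in 1D the Ritz projection derivative is a global $L^2$ projection with a single mean constraint, and the constraint contributes only $O(h)\|w\|_{H^2}$.

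For sharpness, I would show that any $v_h\in V_h^{(k)}$ is continuous, while $\varphi_p$ has a jump of $-1$ at $Q$. On the element $K\ni Q$, where $Q$ is not a node and, by quasi-uniformity, lies at distance comparable to $h$ from the endpoints (or after averaging over element positions), a scaling argument on the reference element gives $\inf_{v_h}\|\varphi_p-v_h\|_{L^2(K)}\ge c\,h^{1/2}$. This holds independently of $k$, because the best polynomial approximation of a unit step on the reference element has error bounded below by a constant depending on the relative position of $Q$. Consequently $\|\varphi-\varphi_h\|_{L^2}\ge c\,|p^{\ast}|h^{1/2}$ whenever $p^{\ast}\neq0$, which shows that the rate cannot be improved by raising $k$.
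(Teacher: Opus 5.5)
\textbf{Upper bound.} Your argument for the upper bound is correct. For the dipole part it takes a genuinely different route from the paper.

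The paper treats the monopole exactly as you do (C\'ea plus Aubin--Nitsche, giving $O(h^{3/2})$). For the dipole, it writes down the $k=1$ Galerkin solution explicitly: slope $p^{\ast}/L$ off the element $e_Q\ni Q$, and slope $p^{\ast}/L-p^{\ast}/h_{e_Q}$ on it. It then observes that the error equals the constant $C_0=p^{\ast}(b-a)/(2L)=O(h)$ off $e_Q$ and is $O(|p^{\ast}|)$ on $e_Q$, and integrates. The case $k\ge 2$ is only asserted to follow by an analogous computation.

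Your identity $\|e\|_{L^2}^2=|w'(Q)-w_h'(Q)|$ replaces that construction by a pointwise derivative bound for the Ritz projection of the dual solution, and it treats all $k$ at once. There is a harmless sign slip in the middle: $(\varphi_{p,h},-w'')=+\int\varphi_{p,h}'w'$. Your final identity is nevertheless right.

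The step you call the main obstacle is in fact exact. Since $\int_{\mathbb{T}}w'\,dq=0$, and the elementwise $L^2$ projection $\Pi_{k-1}$ onto discontinuous polynomials of degree $k-1$ preserves element integrals, $\Pi_{k-1}w'$ already lies in the mean-zero space $\{v_h' : v_h\in V_h^{(k)}\}$. Hence $w_h'=\Pi_{k-1}w'$ with no global correction at all. The local bound $\|g-\Pi_{k-1}g\|_{L^\infty(K)}\le Ch^{1/2}\|g'\|_{L^2(K)}$ then finishes the proof.

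The paper's computation buys explicit error structure: a small global offset plus an $O(1)$ error confined to $e_Q$. Yours buys a degree-independent proof, with nothing deferred to an ``analogous computation''.

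\textbf{Sharpness.} Quasi-uniformity does not keep $Q$ away from the endpoints of its element, and a lower bound on $K$ alone genuinely degenerates. A unit step at relative position $\epsilon$ on the reference element is approximated by a constant with $L^2$ error $\epsilon^{1/2}$. So neither the distance claim nor averaging over element positions yields $c\,|p^{\ast}|h^{1/2}$ uniformly in the position of $Q$, which is what you assert.

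The fix is to use continuity at the node $q_0$ of $K$ nearest to $Q$. Let $\alpha$ be the one-sided limit of $\varphi$ at $Q$ from the side of $q_0$, and $\beta$ the other one-sided limit, so $|\alpha-\beta|=|p^{\ast}|$. Then one of two cases holds.
\begin{itemize}
\item If $|v_h(q_0)-\alpha|\ge|p^{\ast}|/2$, use norm equivalence for degree-$k$ polynomials on the neighbouring element, where $\varphi$ is within $O(h)$ of $\alpha$. This gives the bound there.
\item If $|v_h(q_0)-\beta|\ge|p^{\ast}|/2$, use the same norm equivalence on the part of $K$ beyond $Q$. That part has length at least $h_K/2$, and $\varphi$ is within $O(h)$ of $\beta$ there.
\end{itemize}
The paper's one-line sharpness argument (``on a subset of $e_Q$'') has the same omission.
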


\begin{proof}
By linearity of \eqref{eq:pde}, decompose
$\varphi = \varphi_M + \varphi_D$ into the the monopole and dipole contributions
\[
\varphi_M(q) \;=\; \psi^{\ast}\!\left[\tfrac{r(q)^2}{2L} - \tfrac{|r(q)|}{2} + \tfrac{L}{12}\right],
\qquad
\varphi_D(q) \;=\; p^{\ast}\!\left[\tfrac{r(q)}{L} - \tfrac{1}{2}\operatorname{sgn} r(q)\right],
\]
with $\varphi_h^M, \varphi_h^D \in V_h^{(k)}$ defined as the Galerkin solutions
of \eqref{eq:galerkin} with right-hand sides $\psi^{\ast} v_h(Q)$ and $-p^{\ast} v_h'(Q)$,
respectively. The two contributions are estimated separately.

For $\varphi_M$, since we have $\varphi_M \in H^1(\mathbb{T})$ and the Galerkin problem
for $\varphi_M$ falls within the classical situation, 
so C\'{e}a's lemma yields
\[
\|\varphi_M - \varphi_h^M\|_{H^1} \;\leq\; C\, h^{1/2}\, |\psi^{\ast}|.
\]
The Aubin--Nitsche duality argument is applicable here, gaining one power of $h$:
\begin{equation}
\|\varphi_M - \varphi_h^M\|_{L^2}
\;\leq\; C\, h^{3/2}\, |\psi^{\ast}|.
\label{eq:proof-monopole}
\end{equation}

For $\varphi_D$, we exploit the 1D structure to construct $\varphi_h^D$ explicitly; we
present the argument for $k = 1$, the case $k \geq 2$ following by an analogous 
computation. Let $e_Q := [q_{i_Q}, q_{i_Q+1}]$ denote the element of length $h_{e_Q}$
containing $Q$. The discrete solution $\varphi_h^D \in V_h^{(1)}$ is the continuous
piecewise linear function with slope $p^{\ast}/L$ on every element $e \neq e_Q$ and slope
$p^{\ast}/L - p^{\ast}/h_{e_Q}$ on $e_Q$, normalized by the zero-mean constraint. In
particular, the off-$e_Q$ slope coincides with the continuous slope of $\varphi_D$ on
$\mathbb{T} \setminus \{Q\}$. A direct elementwise check confirms that this
$\varphi_h^D$ satisfies
$\int_0^L (\varphi_h^D)'\, v_h'\, dq = -p^{\ast}\, v_h'(Q)$
for every $v_h \in V_h^{(1)}$.

Setting $\eta := \varphi_D - \varphi_h^D$, both $\varphi_D$ and $\varphi_h^D$ are linear with
slope $p^{\ast}/L$ off $e_Q$, so $\eta$ is constant on $\mathbb{T} \setminus e_Q$; call this
constant $C_0$. With $a := Q - q_{i_Q}$, $b := q_{i_Q+1} - Q$, direct
evaluation gives
\begin{equation}
C_0 = \frac{p^{\ast}(b - a)}{2L}, \qquad |C_0| \leq \frac{|p^{\ast}|\, h_{e_Q}}{2L}.
\nonumber
\end{equation}
On $e_Q$, $\varphi_h^D$ is linear connecting $\varphi_h^D(q_{i_Q}) = p^{\ast}(L - h_{e_Q})
/(2L)$ to $\varphi_h^D(q_{i_Q+1}) = -p^{\ast}(L - h_{e_Q})/(2L)$, while $\varphi_D$ is
piecewise linear with the value jump $[\varphi_D]_Q = -p^{\ast}$. Hence $\|\eta\|_{L^{\infty}
(e_Q)} \leq C |p^{\ast}|$ and
\begin{equation}
\|\eta\|_{L^2(e_Q)} \;\leq\; C\,|p^{\ast}|\, h_{e_Q}^{1/2}.
\nonumber
\end{equation}
Combining,
\begin{equation}
\|\varphi_D - \varphi_h^D\|_{L^2(\mathbb{T})}^2
\;=\; (L - h_{e_Q})\,C_0^2 + \|\eta\|_{L^2(e_Q)}^2
\;\leq\; C_1 h_{e_Q}^2 + C_2 |p^{\ast}|^2\, h_{e_Q}
\;\leq\; C\,|p^{\ast}|^2\, h,
\label{eq:proof-dipole}
\end{equation}
yielding
\begin{equation}
\|\varphi_D - \varphi_h^D\|_{L^2(\mathbb{T})} \;\leq\; C\, h^{1/2}\, |p^{\ast}|.
\nonumber
\end{equation}

Note that the rate is sharp: any $v_h \in V_h^{(k)}$ satisfies
$[v_h]_Q = 0$, so $|\varphi_D - v_h| \geq |p^{\ast}|/2$ on a subset of $e_Q$, 
giving $\|\varphi_D - v_h\|_{L^2(\mathbb{T})} \geq c\, h^{1/2}\,
|p^{\ast}|$ for all $v_h \in V_h^{(k)}$.

Combining \eqref{eq:proof-monopole} and \eqref{eq:proof-dipole} via the triangle
inequality,
\[
\|\varphi - \varphi_h\|_{L^2}
\;\leq\; \|\varphi_M - \varphi_h^M\|_{L^2} + \|\varphi_D - \varphi_h^D\|_{L^2}
\;\leq\; C\, h^{1/2}\, \bigl(|\psi^{\ast}| + |p^{\ast}|\bigr).
\]
The sharpness statement above shows that the rate $h^{1/2}$ cannot be improved by raising
$k$.
\end{proof}

\begin{remark}[Extension to multiple particles]
\label{rem:multi-particle}
Theorem~\ref{thm:strongL2} extends to the multi-particle source \eqref{eq:rho} with $N_p$
decorated particles, provided the particle
locations $Q_a$ are distinct and none coincides with a mesh node. By linearity of
\eqref{eq:pde} and \eqref{eq:galerkin} and the per-particle bound combined  gives
\begin{equation}
\|\varphi - \varphi_h\|_{L^2}
\;\leq\; C\, h^{1/2} \sum_{a=1}^{N_p} \bigl(|\psi_a^{\ast}| + |p_a^{\ast}|\bigr).
\nonumber
\end{equation}
The constant remains uniformly bounded in $N_p$ due to physics, where
the per-particle weights scale as $\psi_a^{\ast}, p_a^{\ast} = \mathcal{O}(1/N_p)$, yielding $O(h^{1/2})$
convergence under multiple particles.
\end{remark}

\section{Numerical results}
\label{sec:numerical}
This section presents numerical experiments carried out with the proposed SWPIC framework. We first consider a test particle case to provide a basic illustration of the algorithm. The method is then applied to standard 1D1V VP tests, including the two-stream instability and Landau damping. All experiments use a periodic boundary condition.

\subsection{Test particle}
\label{sec:toy-example}
To illustrate the Hamiltonian dynamics captured by SWPIC, we consider a simple test particle problem. We take \(N=30\) particles moving under the periodic potential
\begin{equation*}
V(q)=1-\cos(\kappa q),
\qquad
\kappa=\frac{2\pi}{L},
\end{equation*}
on a domain of length \(L=10\).  The particles are initialized in three groups of ten, each centered at a different phase-space location. Although simple, this configuration already exhibits nontrivial collective behavior. This is a multi-particle generalization of the example from Section \ref{sec:singlelump-dynamics}.

In the standard PIC description, the dynamics is determined by the particle positions, momenta, and weights, giving \(3N=90\) degrees of freedom. In SWPIC, these particles are represented by three decorated particles. Each decorated particle is described by central variables \((Q_a,P_a)\) together with moment variables \((q_a^*,p_a^*,\psi_a^*)\), resulting in only \(3\times5=15\) degrees of freedom overall.
The Hamiltonian for this reduced system is
\begin{equation*}
H_{\text{SWPIC}}
=
\sum_{a=1}^{3}
\left(
\frac{\psi_a^*}{2m}P_a^2
+
\frac{1}{m}P_a q_a^*
+
\psi_a^*\,[1-\cos(\kappa Q_a)]
-
p_a^*\,\kappa\,\sin(\kappa Q_a)
\right).
\end{equation*}
The corresponding equations of motion derived from the Scovel--Weinstein Poisson bracket are
\begin{subequations}
\label{eq:swpic_test_particle}
\begin{align}
\dot{Q}_a &= \frac{P_a}{m},
&
\dot{P}_a &= -\,\kappa\,\sin(\kappa Q_a),
&
\dot{\psi}_a^* &= 0,
\label{eq:swpic_test_particle_QP}
\\[4pt]
\dot{p}_a^*
&=
-\,\frac{q_a^*}{m},
&
\dot{q}_a^*
&=
p_a^*\,\kappa^2\,\cos(\kappa Q_a).
\label{eq:swpic_test_particle_qp}
\end{align}
\end{subequations}

\autoref{fig:combined_init_final} compares the PIC and SWPIC trajectories in the prescribed potential. Panels~(a) and~(b) show the initial particle configuration together with its reduction into three decorated particles. At \(t=10\), panels~(c) and~(d) show that the reduced trajectories remain consistent with the collective motion of the corresponding particle groups
whiles \autoref{fig:variables_evolution} shows the evolution of the central variables together with the associated internal variables $(Q_a,P_a,q_a^*,p_a^*,\psi_a^*).$
This example illustrates how SWPIC captures the essential particle dynamics using a significantly smaller set of variables while retaining the underlying geometric structure. 

\begin{figure}[htp]
    \centering
    \begin{tikzpicture}
    \begin{groupplot}[
      group style={
        group size=4 by 1,
        horizontal sep=18pt,
      },
      width=4.0cm,
      height=4.0cm,
      xmin=0, xmax=10, ymin=-4, ymax=4,
      grid=none,
      tick style={black, thick},
      axis line style={thick},
      xlabel style={font=\bfseries\scriptsize, yshift=1pt},
      ylabel style={font=\bfseries\scriptsize, yshift=-1pt},
      every tick label/.append style={font=\scriptsize},
      clip=false
    ]

    \nextgroupplot[xlabel={$q$}, ylabel={$p$}]
      \addplot[only marks, mark=*, mark size=1.6pt, red]
        table[col sep=comma, x=q, y=p, restrict expr to domain={\thisrow{group}}{0:0}]
              {Data/PIC_phase_t0.csv};
      \addplot[only marks, mark=*, mark size=1.6pt, blue]
        table[col sep=comma, x=q, y=p, restrict expr to domain={\thisrow{group}}{1:1}]
              {Data/PIC_phase_t0.csv};
      \addplot[only marks, mark=*, mark size=1.6pt, green!70!black]
        table[col sep=comma, x=q, y=p, restrict expr to domain={\thisrow{group}}{2:2}]
              {Data/PIC_phase_t0.csv};
      \node[anchor=north west, font=\footnotesize, inner sep=2pt] 
        at (rel axis cs:0.02,0.98) {(a)};
      \addplot[forget plot, mark=none] coordinates{(0,0)} node[
          anchor=north east, fill=white, draw=black, rounded corners=1pt,
          line width=0.4pt, inner sep=2pt, font=\footnotesize
      ] at (rel axis cs:0.94,0.94) {\shortstack[c]{PIC\\$t=0$}};

    \nextgroupplot[xlabel={$Q$}, ylabel={}, yticklabels={}]
      \addplot[only marks, mark=*, mark size=2pt, red]
        table[col sep=comma, x=Q, y=P, restrict expr to domain={\thisrow{group}}{0:0}]
              {Data/SWPIC_phase_t0.csv};
      \addplot[only marks, mark=*, mark size=2pt, blue]
        table[col sep=comma, x=Q, y=P, restrict expr to domain={\thisrow{group}}{1:1}]
              {Data/SWPIC_phase_t0.csv};
      \addplot[only marks, mark=*, mark size=2pt, green!70!black]
        table[col sep=comma, x=Q, y=P, restrict expr to domain={\thisrow{group}}{2:2}]
              {Data/SWPIC_phase_t0.csv};
      \node[anchor=north west, font=\footnotesize, inner sep=2pt] 
        at (rel axis cs:0.02,0.98) {(b)};
      \addplot[forget plot, mark=none] coordinates{(0,0)} node[
          anchor=north east, fill=white, draw=black, rounded corners=1pt,
          line width=0.4pt, inner sep=2pt, font=\footnotesize
      ] at (rel axis cs:0.94,0.94) {\shortstack[c]{SWPIC\\$t=0$}};

    \nextgroupplot[xlabel={$q$}, ylabel={}, yticklabels={}]
      \addplot[only marks, mark=*, mark size=1.6pt, red]
        table[col sep=comma, x=q, y=p, restrict expr to domain={\thisrow{group}}{0:0}]
              {Data/PIC_phase_t1.csv};
      \addplot[only marks, mark=*, mark size=1.6pt, blue]
        table[col sep=comma, x=q, y=p, restrict expr to domain={\thisrow{group}}{1:1}]
              {Data/PIC_phase_t1.csv};
      \addplot[only marks, mark=*, mark size=1.6pt, green!70!black]
        table[col sep=comma, x=q, y=p, restrict expr to domain={\thisrow{group}}{2:2}]
              {Data/PIC_phase_t1.csv};
      \node[anchor=north west, font=\footnotesize, inner sep=2pt] 
        at (rel axis cs:0.02,0.98) {(c)};
      \addplot[forget plot, mark=none] coordinates{(0,0)} node[
          anchor=north east, fill=white, draw=black, rounded corners=1pt,
          line width=0.4pt, inner sep=2pt, font=\footnotesize
      ] at (rel axis cs:0.94,0.94) {\shortstack[c]{PIC\\$t=1$}};

    \nextgroupplot[xlabel={$Q$}, ylabel={}, yticklabels={}]
      \addplot[only marks, mark=*, mark size=2pt, red]
        table[col sep=comma, x=Q, y=P, restrict expr to domain={\thisrow{group}}{0:0}]
              {Data/SWPIC_phase_t1.csv};
      \addplot[only marks, mark=*, mark size=2pt, blue]
        table[col sep=comma, x=Q, y=P, restrict expr to domain={\thisrow{group}}{1:1}]
              {Data/SWPIC_phase_t1.csv};
      \addplot[only marks, mark=*, mark size=2pt, green!70!black]
        table[col sep=comma, x=Q, y=P, restrict expr to domain={\thisrow{group}}{2:2}]
              {Data/SWPIC_phase_t1.csv};
      \node[anchor=north west, font=\footnotesize, inner sep=2pt] 
        at (rel axis cs:0.02,0.98) {(d)};
      \addplot[forget plot, mark=none] coordinates{(0,0)} node[
          anchor=north east, fill=white, draw=black, rounded corners=1pt,
          line width=0.4pt, inner sep=2pt, font=\footnotesize
      ] at (rel axis cs:0.94,0.94) {\shortstack[c]{SWPIC\\$t=1$}};

    \end{groupplot}
    \end{tikzpicture}
\caption{Comparison of PIC and SWPIC particle evolution. Panels~(a,b) show the initial configuration: (a) 30 microscopic particles grouped by color and (b) their SWPIC reduction into three decoarted particles obtained via $k$-means. At $t=10$, panels~(c,d) show (c) the full PIC system and (d) decorated particles, which follow the same collective trajectories while preserving Hamiltonian structure with fewer DOFs.}
    \label{fig:combined_init_final}
\end{figure}

\begin{figure}[htp]
\centering

\tikzset{
  labelbox/.style={
    anchor=north east,
    fill=white,
    draw=black,
    rounded corners=0.8pt,
    line width=0.3pt,
    inner sep=0.6pt,
    font=\small
  }
}

\pgfplotsset{
  lumpaxis/.style={
    width=3.8cm,
    height=3.6cm,
    grid=none,
    tick style={black, thin},
    tick align=outside,
    tick pos=left,
    major tick length=1pt,
    every tick label/.append style={font=\scriptsize},
    axis line style={thick},
    xlabel style={font=\bfseries\scriptsize},
    ylabel={},
    scaled y ticks=false,
    yticklabel style={/pgf/number format/fixed},
    clip=false
  }
}

\begin{tikzpicture}[font=\scriptsize\sffamily, scale=0.9, every node/.style={scale=0.9}]

\begin{groupplot}[
  lumpaxis,
  group style={
    group size=5 by 3,
    horizontal sep=20pt,
    vertical sep=18pt
  }
]


\nextgroupplot[xtick={0,0.5,1}, xticklabels={}, xlabel={}]
\addplot[thick, red] table[col sep=comma, x=time, y=Q]{Data/SWPIC_lump_1.csv};
\node[labelbox] at (rel axis cs:0.96,0.96) {$Q_1$};

\nextgroupplot[xtick={0,0.5,1}, xticklabels={}, xlabel={}]
\addplot[thick, red] table[col sep=comma, x=time, y=P]{Data/SWPIC_lump_1.csv};
\node[labelbox] at (rel axis cs:0.96,0.96) {$P_1$};

\nextgroupplot[xtick={0,0.5,1}, xticklabels={}, xlabel={}]
\addplot[thick, red] table[col sep=comma, x=time, y=q_star]{Data/SWPIC_lump_1.csv};
\node[labelbox] at (rel axis cs:0.96,0.96) {$q_1^{*}$};

\nextgroupplot[xtick={0,0.5,1}, xticklabels={}, xlabel={}]
\addplot[thick, red] table[col sep=comma, x=time, y=p_star]{Data/SWPIC_lump_1.csv};
\node[labelbox] at (rel axis cs:0.96,0.96) {$p_1^{*}$};

\nextgroupplot[xtick={0,0.5,1}, xticklabels={}, xlabel={}]
\addplot[thick, red] table[col sep=comma, x=time, y=psi_star]{Data/SWPIC_lump_1.csv};
\node[labelbox] at (rel axis cs:0.96,0.96) {$\psi_1^{*}$};


\nextgroupplot[xtick={0,0.5,1}, xticklabels={}, xlabel={}]
\addplot[thick, blue] table[col sep=comma, x=time, y=Q]{Data/SWPIC_lump_2.csv};
\node[labelbox] at (rel axis cs:0.96,0.96) {$Q_2$};

\nextgroupplot[xtick={0,0.5,1}, xticklabels={}, xlabel={}]
\addplot[thick, blue] table[col sep=comma, x=time, y=P]{Data/SWPIC_lump_2.csv};
\node[labelbox] at (rel axis cs:0.96,0.96) {$P_2$};

\nextgroupplot[xtick={0,0.5,1}, xticklabels={}, xlabel={}]
\addplot[thick, blue] table[col sep=comma, x=time, y=q_star]{Data/SWPIC_lump_2.csv};
\node[labelbox] at (rel axis cs:0.96,0.96) {$q_2^{*}$};

\nextgroupplot[xtick={0,0.5,1}, xticklabels={}, xlabel={}]
\addplot[thick, blue] table[col sep=comma, x=time, y=p_star]{Data/SWPIC_lump_2.csv};
\node[labelbox] at (rel axis cs:0.96,0.96) {$p_2^{*}$};

\nextgroupplot[xtick={0,0.5,1}, xticklabels={}, xlabel={}]
\addplot[thick, blue] table[col sep=comma, x=time, y=psi_star]{Data/SWPIC_lump_2.csv};
\node[labelbox] at (rel axis cs:0.96,0.96) {$\psi_2^{*}$};


\nextgroupplot[xlabel={$t$}, xtick={0,0.5,1}]
\addplot[thick, green!70!black] table[col sep=comma, x=time, y=Q]{Data/SWPIC_lump_3.csv};
\node[labelbox] at (rel axis cs:0.96,0.96) {$Q_3$};

\nextgroupplot[xlabel={$t$}, xtick={0,0.5,1}]
\addplot[thick, green!70!black] table[col sep=comma, x=time, y=P]{Data/SWPIC_lump_3.csv};
\node[labelbox] at (rel axis cs:0.96,0.96) {$P_3$};

\nextgroupplot[xlabel={$t$}, xtick={0,0.5,1}]
\addplot[thick, green!70!black] table[col sep=comma, x=time, y=q_star]{Data/SWPIC_lump_3.csv};
\node[labelbox] at (rel axis cs:0.96,0.96) {$q_3^{*}$};

\nextgroupplot[xlabel={$t$}, xtick={0,0.5,1}]
\addplot[thick, green!70!black] table[col sep=comma, x=time, y=p_star]{Data/SWPIC_lump_3.csv};
\node[labelbox] at (rel axis cs:0.96,0.96) {$p_3^{*}$};

\nextgroupplot[xlabel={$t$}, xtick={0,0.5,1}]
\addplot[thick, green!70!black] table[col sep=comma, x=time, y=psi_star]{Data/SWPIC_lump_3.csv};
\node[labelbox] at (rel axis cs:0.96,0.96) {$\psi_3^{*}$};

\end{groupplot}
\end{tikzpicture}

\caption{
Time evolution of the Scovel--Weinstein cluster variables for three representative clusters.
Each row shows the trajectories of
\((Q_a,P_a,q_a^*,p_a^*,\psi_a^*)\)
for one cluster, with \(a=1,2,3\) displayed in red, blue, and green.
}
\label{fig:variables_evolution}
\end{figure}

\subsection{Two-stream instability}
\label{sec:twostream}
We apply SWPIC to the warm two-stream instability in 1D1V. The reference PIC simulation uses $10^{5}$ particles, while the SWPIC run employs $10^{4}$ phase-pace clusters obtained from the same microscopic particle realization. All quantities are nondimensionalized with $m_e=\epsilon_0=1$ on a periodic domain of length $L=2\pi$. The initial distribution consists of two symmetric warm beams,
\[
f(t=0,q,p)
= \tfrac12\!\left[f_M(p-1)+f_M(p+1)\right], \quad
f_M(p)=\frac{1}{\sqrt{2\pi T}}\,e^{-p^2/(2T)}, \quad T=0.3^2,
\]
sampled uniformly in $q\in[0,2\pi)$. The SWPIC initial condition is obtained by clustering the sampled particles in $(q,p)$ and assigning each cluster its central variables and first--order internal moments, as described in
Algorithm~\ref{alg:lumping_procedure}. The electric field is computed using a CG Poisson solve with $50$ elements.
Time integration is performed using a leapfrog scheme with $\Delta t=0.01$ for $1000$ steps. \autoref{fig:pic_swpic} shows phase-space snapshots at $t = 0$, $5$, and $10$. Both PIC and SWPIC capture the characteristic distortion of the beams as
the instability develops. The electric–field amplitude,
\begin{equation}
    E_{\mathrm{amp}}(t)
    = \Bigl(\tfrac{1}{L}\!\int_0^L E(q,t)^2\,dq\Bigr)^{1/2},
\label{eq:Eamp}
\end{equation}
is reported in \autoref{fig:twostream_compare}, where
 the growth rate is found to be $\gamma = 0.7863$. Both
methods follow this trend closely. In this setting, SWPIC reproduces the
observed field evolution and phase-space dynamics of the PIC simulation while
using an order of magnitude fewer particles.
\begin{figure}[htp!]
    \centering
    \includegraphics[width=0.33\linewidth]{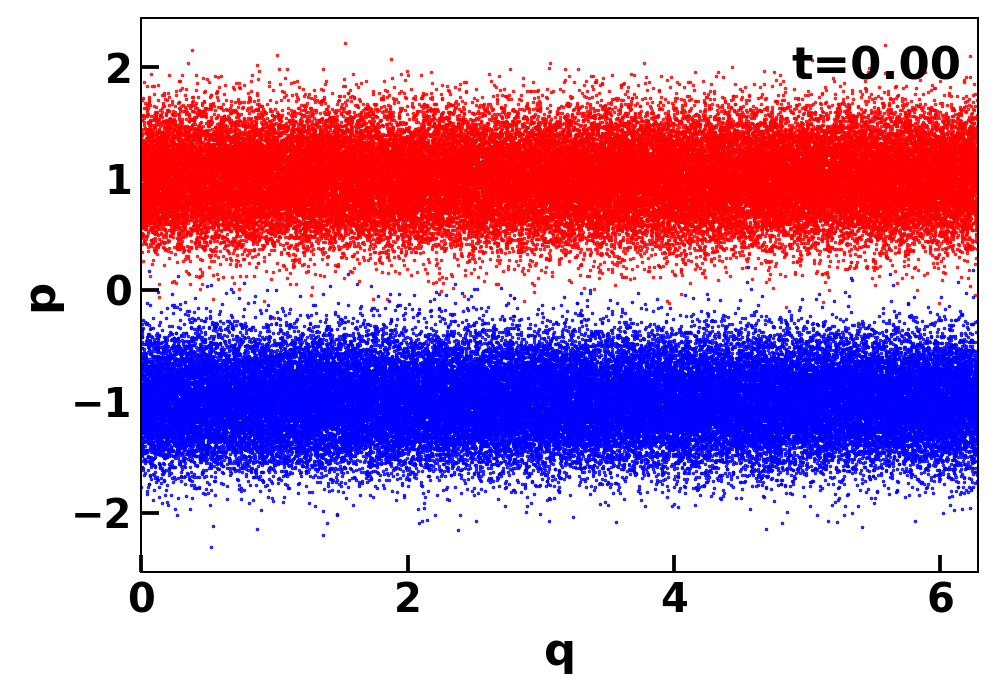}\hfill
    \includegraphics[width=0.33\linewidth]{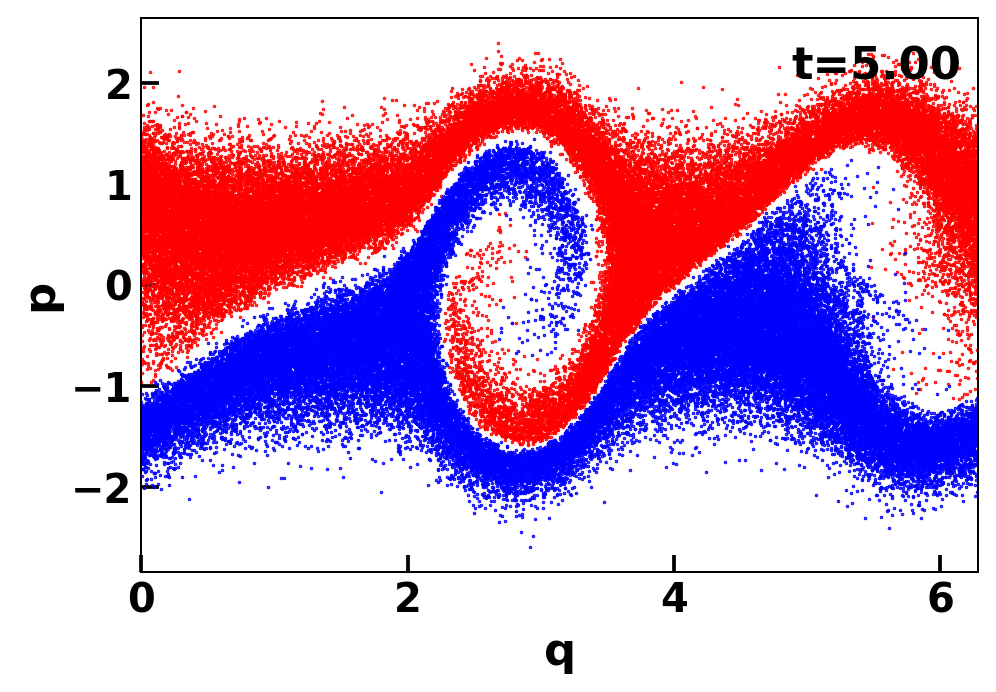}\hfill
    \includegraphics[width=0.33\linewidth]{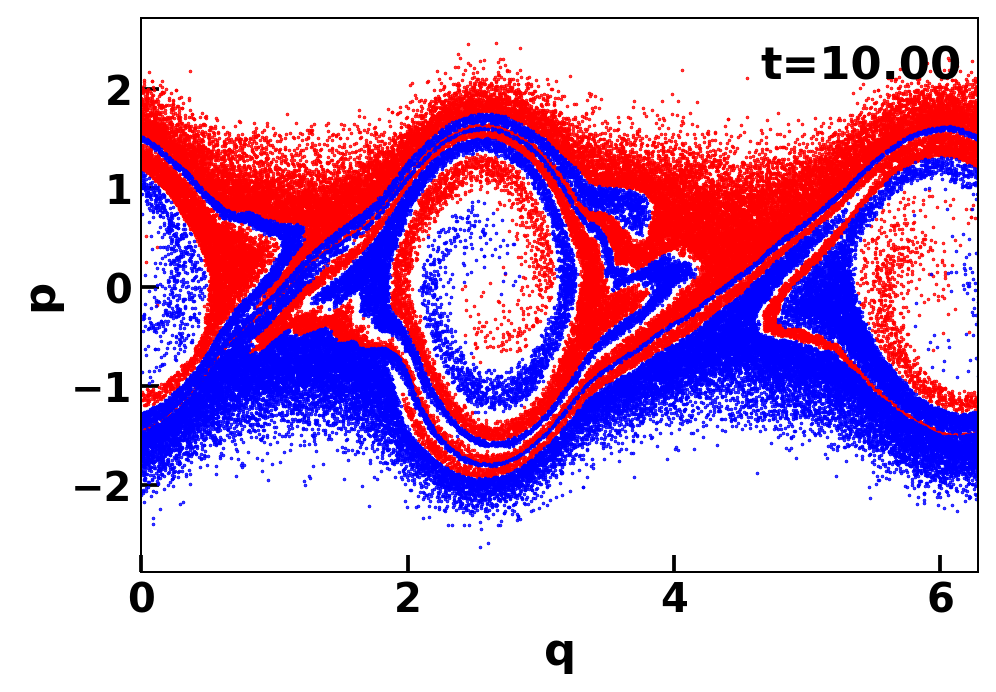} \\

    \includegraphics[width=0.33\linewidth]{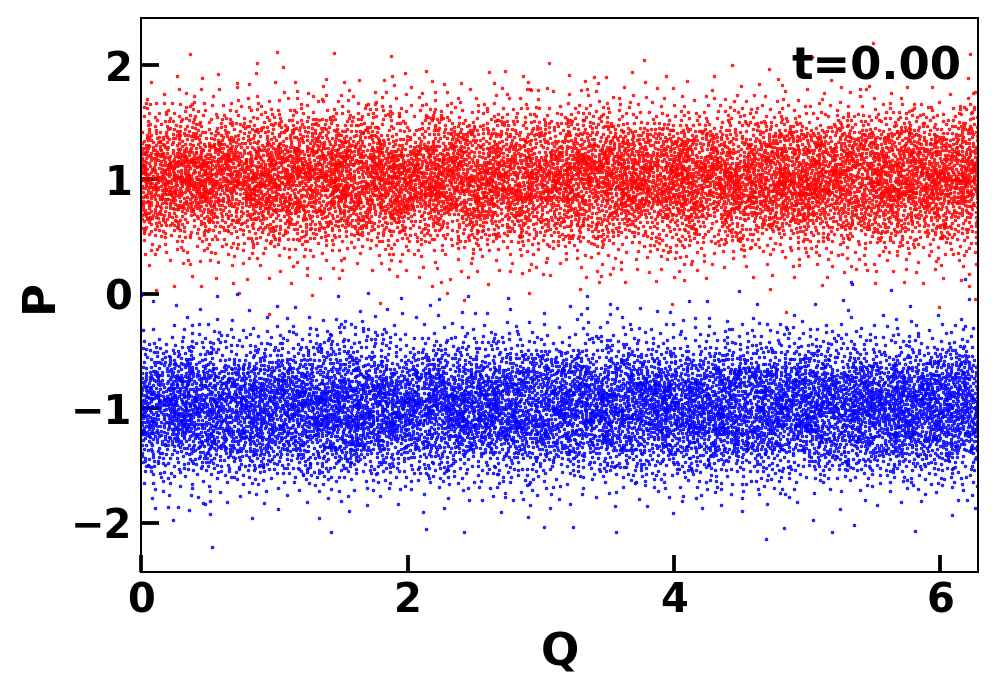}\hfill
    \includegraphics[width=0.33\linewidth]{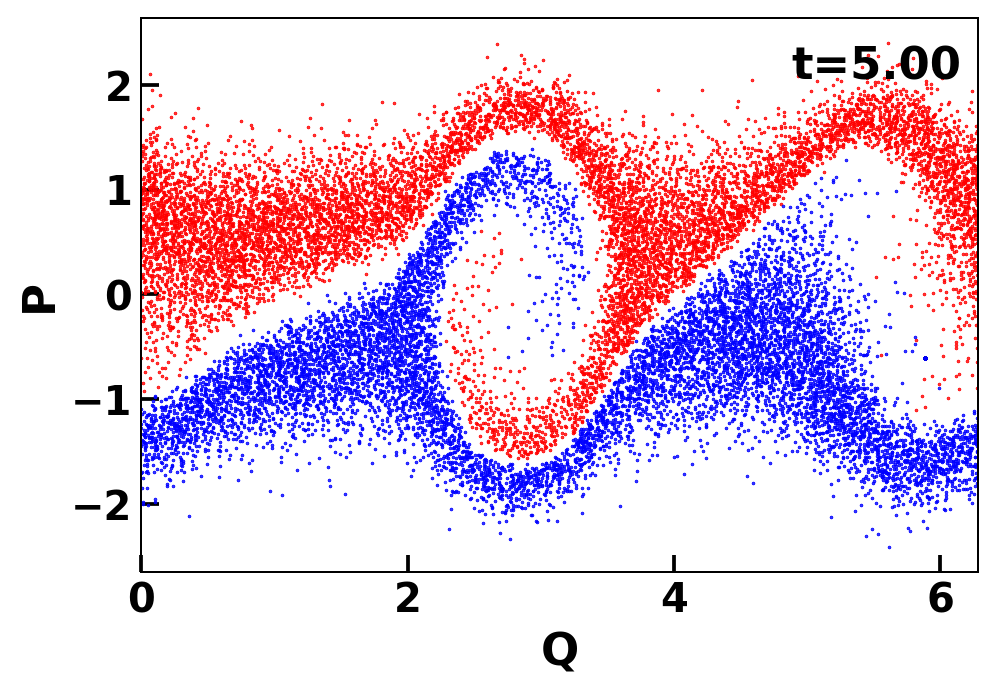}\hfill
    \includegraphics[width=0.33\linewidth]{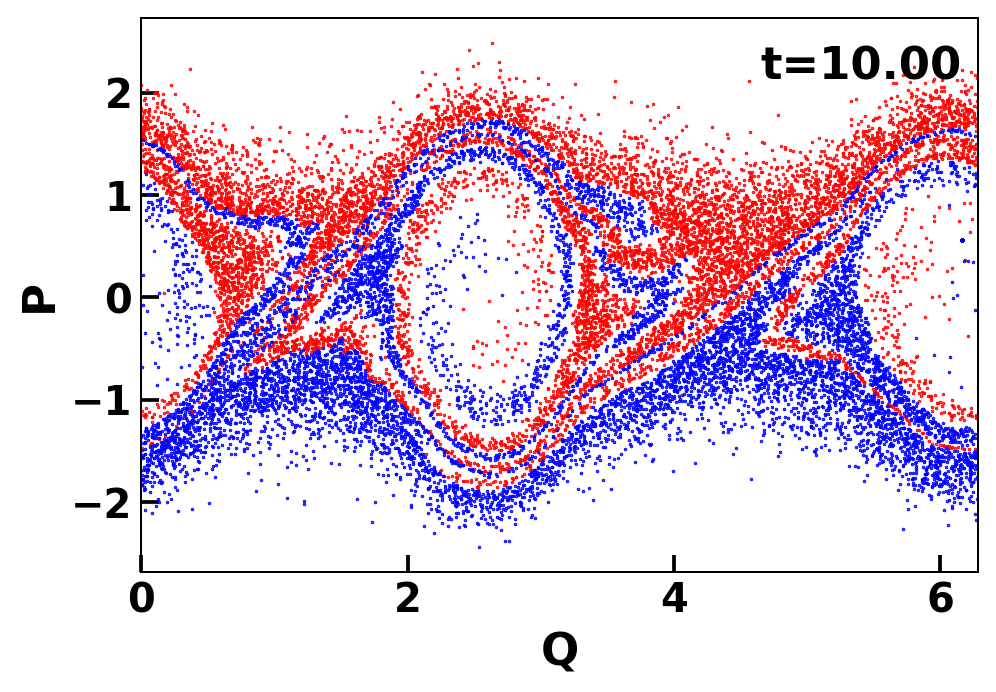} \\
    \caption{Two-stream instability of PIC and SWPIC at $t=0$, $t=5$, and $t=10$. 
The top row shows PIC results with $10^6$ particles, 
and the bottom row shows SWPIC results with $10^5$ decorated particles. 
SWPIC closely reproduces the nonlinear phase-space structures and mixing observed in PIC, capturing the full instability dynamics with fewer DOFs.}
    \label{fig:pic_swpic}
\end{figure}

\begin{figure}[htp!]
    \centering
    \begin{tikzpicture}
        \begin{axis}[
            width=0.6\linewidth,
            height=0.43\linewidth,
            xlabel={Time},
            ylabel={$E$-field amplitude},
            xlabel style={font=\bfseries\small, yshift=2pt},
            ylabel style={font=\bfseries\small, yshift=-2pt},
            ymode=log,
            grid=both,
            tick style={black, thin},
            line width=0.9pt,
            legend style={
                at={(0.97,0.03)},
                anchor=south east,
                draw=black,
                line width=0.4pt,
                rounded corners=2pt,
                font=\footnotesize,
                row sep=1pt,
                /tikz/every even column/.style={column sep=3pt},
                legend image post style={xscale=0.9, yscale=0.9}
            },
            legend cell align=left,
            enlarge x limits=false,
            enlarge y limits=false
        ]
            \addplot[
                thick,
                red,
                solid,
                opacity=0.9
            ]
            table[x=t_pic, y=amp_pic, col sep=comma]
            {Data/twostream_compare.csv};
            \addlegendentry{PIC}
            
            \addplot[
                thick,
                blue,
                dashed,
                opacity=0.9
            ]
            table[x=t_swpic, y=amp_swpic, col sep=comma]
            {Data/twostream_compare.csv};
            \addlegendentry{SWPIC}
            
            \addplot[
                thick,
                black,
                dash pattern=on 3pt off 2pt,
                opacity=0.9
            ]
            table[x=t_fit, y=fit_y, col sep=comma]
            {Data/twostream_compare.csv};
            \addlegendentry{$\gamma=0.7863$}
        \end{axis}
    \end{tikzpicture}
    \caption{$E$-field amplitude for the two-stream instability in PIC and SWPIC. 
Both simulations exhibit identical exponential growth during the linear phase and reach the same saturation level. 
The fitted line corresponds to a growth rate of $\gamma=0.7863$.}
    \label{fig:twostream_compare}
\end{figure}

\subsection{Strong Landau damping}
\label{sec:landau} 
We next examine the performance of SWPIC in the strong Landau damping regime. The reference PIC simulation uses $10^{5}$ particles, while the SWPIC simulation employs
$1\times10^{4}$ phase-space clusters obtained from the same microscopic realization. All quantities are nondimensionalized with $m_e=\epsilon_0=1$ on a periodic domain of length $L=12$. The initial condition is a perturbed Maxwellian,
\[
f(t=0,q,p)=f_M(p)\,[1+A\cos(kq)], \qquad
f_M(p)=\tfrac{1}{\sqrt{2\pi}}\,e^{-p^2/2},
\]
with $A=0.5$ and $k=2\pi/L$.
Particles are sampled uniformly in $q$ and from the corresponding velocity distribution. The SWPIC initialization follows Algorithm~\ref{alg:lumping_procedure}. The electric field is obtained from a CG Poisson solve with $100$ grid points. The step of $\Delta t=0.2$ is used for evolving $500$ steps, and the field amplitude~\eqref{eq:Eamp} is monitored.

\autoref{fig:landau_phase_compare} shows the evolution of $E_{\mathrm{amp}}(t)$, giving a damping rate of $\gamma=-0.236$, in agreement with established nonlinear values~\cite{qiu2010conservative}. By $t=25$, both PIC and SWPIC produce comparable phase–space structures following the initial decay. The temporal behavior of the field amplitude agrees closely between the two methods. A slight phase offset becomes visible only after the field has decayed to its small-amplitude remnant, a regime where the dynamics are highly sensitive to sampling variability and to the reduced number of decoarted particles. This late-time drift is expected in strong Landau damping, where the residual field is extremely weak and small differences accumulate over many oscillations, and it does not seem to affect the extracted damping rate or the overall agreement between the two methods. 
In this setting, SWPIC reproduces the qualitative dynamics observed
in the full PIC system while employing an order of magnitude fewer particles.

\begin{figure}[htp]
	\centering
	\noindent
	\begin{minipage}[b]{0.62\linewidth}
		\centering
\resizebox{\linewidth}{!}{%
  \begin{tikzpicture}
        \begin{axis}[
            width=\linewidth,
            height=.9\linewidth,
            xlabel={Time},
            ylabel={$E$-field amplitude},
            xlabel style={font=\bfseries\small, yshift=2pt},
            ylabel style={font=\bfseries\small, yshift=-2pt},
            ymode=log,
            grid=both,
            tick style={black, thin},
            line width=0.9pt,
            legend style={
                at={(0.97,0.03)},
                anchor=south east,
                draw=black,
                line width=0.4pt,
                rounded corners=2pt,
                font=\footnotesize,
                row sep=1pt,
                /tikz/every even column/.style={column sep=3pt},
                legend image post style={xscale=0.9, yscale=0.9}
            },
            legend cell align=left,
            xmin=0, xmax=100,
        ]
            \addplot[
                thick,
                red,
                solid,
                opacity=0.9
            ]
            table[x=t_pic, y=amp_pic, col sep=comma]
            {Data/landau_damping_compare.csv};
            \addlegendentry{PIC}
            
            \addplot[
                thick,
                blue,
                dashed,
                opacity=0.9
            ]
            table[x=t_swpic, y=amp_swpic, col sep=comma]
            {Data/landau_damping_compare.csv};
            \addlegendentry{SWPIC}
            
            \addplot[
                thick,
                black,
                dash pattern=on 3pt off 2pt,
                opacity=0.9
            ]
            table[x=t_fit, y=fit_y, col sep=comma]
            {Data/landau_damping_compare.csv};
            \addlegendentry{$\gamma=-0.236$}
        \end{axis}
    \end{tikzpicture}
}\\
\vspace{.1in}
    {(a) $E$-field amplitude in time}
	\end{minipage}%
	\hspace{0.03\linewidth}
	\begin{minipage}[b]{0.34\linewidth}
		\centering
		\includegraphics[width=\linewidth]{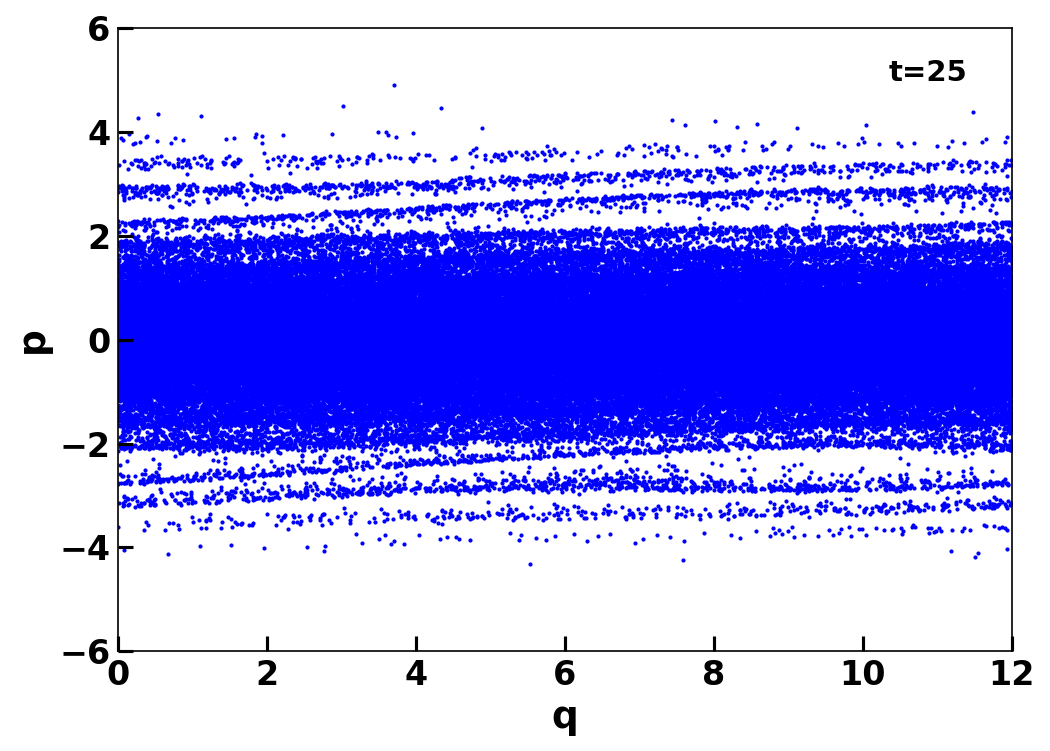}\\
		{(b) PIC}\\
		\includegraphics[width=\linewidth]{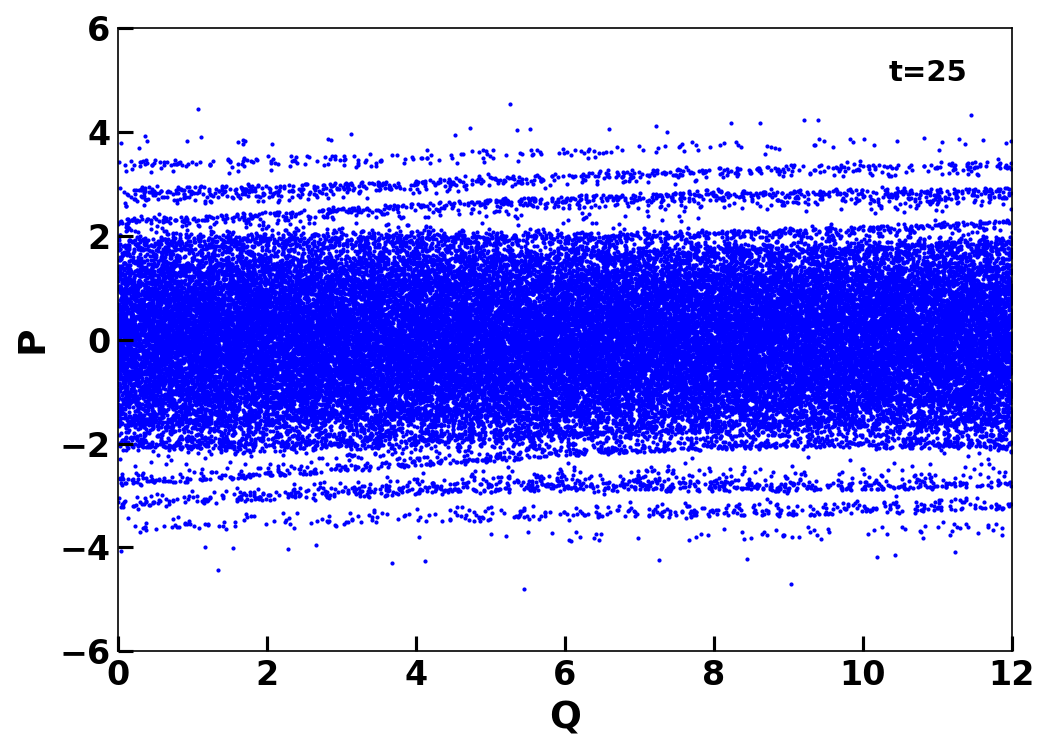}\\
		{(c) SWPIC}
	\end{minipage}
	
\caption{Electric-field amplitude and phase-space comparison for strong Landau damping in PIC and SWPIC. 
Panel~(a) shows the decay of the $E$-field amplitude, with both simulations following the fitted damping rate of $\gamma=-0.236$. Panels~(b) and~(c) show the corresponding phase-space distributions at $t=25$ for PIC and SWPIC, respectively. The PIC system uses $8.8\times 10^4$ while SWPIC uses $10^4$ decorated particles.}
\label{fig:landau_phase_compare}
\end{figure}

\subsection{Quantitative comparison}
We present numerical benchmarks that compare the SWPIC and PIC schemes. The comparison focuses on three key metrics: error scaling, memory usage, and computational cost. These results demonstrate how introducing internal moment DOFs allows SWPIC to reach the same accuracy with roughly an order of magnitude fewer computational particles. All benchmarks were performed using serial implementations of both methods while leaving the distributed study to a future work. 
Both algorithms are implemented with comparable vectorization.

\paragraph{Error scaling and noise reduction}
A key strength of SWPIC is its ability to reduce statistical noise while preserving accuracy.  
We compare the SWPIC and PIC solutions against a continuous VP reference solution solved by a semi-Lagrangian discontinuous Galerkin scheme~\cite{rossmanith2011positivity, seal2012discontinuous}. 
The relative electric field error is computed by
$\varepsilon_E =
{\|E_{\text{num}} - E_{\text{ref}}\|_{L^2}}/{\|E_{\text{ref}}\|_{L^2}}$,
where $E_{\text{ref}}$ is the reference field. As shown in \autoref{fig:error_vs_particles}, PIC required about $8.8\times 10^4$ particles to reach a 1\% error in $E$, whereas SWPIC achieved comparable accuracy with only $10^4$ decoarted particles. For a more balanced comparison, the same data are plotted versus the total
degrees of freedom in \autoref{fig:error_vs_dof}.  Overall,
\autoref{fig:error_scaling_composite} shows that both PIC and SWPIC display the
expected scaling $\propto N^{-1/2}$, but SWPIC attains a comparable
noise level with significantly fewer computational particles, since each cluster
carries higher--order moments.

\begin{figure}[!htp]
    \centering
    \begin{minipage}[b]{0.49\textwidth}
\resizebox{\linewidth}{!}{%
    \begin{tikzpicture}
        \begin{axis}[
          scale only axis,
          width=\linewidth,
          height=0.72\linewidth, 
          line width=0.9pt,       
          xmode=log, ymode=log,
          xlabel={Number of Particles},
          ylabel={Relative $E$-field Error},
          xlabel style={font=\bfseries\small, yshift=2pt},
          ylabel style={font=\bfseries\small, yshift=-2pt},
          grid=both,
          legend style={
    at={(0.98,.58)},
    anchor=south east,
    draw=black,
    rounded corners=2pt,
    font=\footnotesize,
    row sep=1pt,
    /tikz/every even column/.style={column sep=3pt},
    legend image post style={xscale=0.9, yscale=0.9}
          },
          legend cell align=left,
          xmin=1e3,xmax=1e6,
          xtick={1e3,1e4,1e5,1e6},
          ytick={1e-3,1e-2,1e-1},
          scaled ticks=false
        ]
        
            \addplot[
                thick,
                blue,
                mark=square*,
                mark size=1.6pt,
                opacity=0.9
            ]
            table[x=N_swpic, y=err_swpic, col sep=comma]
            {Data/error_vs_particles.csv};
            \addlegendentry{SWPIC}

            \addplot[
                thick,
                red,
                mark=*,
                mark size=1.6pt,
                opacity=0.9
            ]
            table[x=N_spic, y=err_spic, col sep=comma]
            {Data/error_vs_particles.csv};
            \addlegendentry{PIC}

            \addplot[
                thick,
                black,
                dashed,
                opacity=0.9,
                domain=1e3:4.8e5,
                samples=50
            ]{0.02*(x/1e3)^(-0.5)};
            \addlegendentry{$\propto N^{-1/2}$}

            \addplot[
                thick,
                green!60!black,
                dashed,
                opacity=0.9,
                domain=1e3:4.8e5
            ]{0.01};
            \addlegendentry{1\% error}

        \end{axis}
    \end{tikzpicture}
}
    \subcaption{$E$ error vs \# of particles}
    \label{fig:error_vs_particles}
    \end{minipage}
    \begin{minipage}[b]{0.46\textwidth}
    \resizebox{\linewidth}{!}{%
    \centering
    \begin{tikzpicture}
        \begin{axis}[
          scale only axis,
          width=\linewidth,
          height=0.72\linewidth, 
          line width=0.9pt,
          xmode=log, ymode=log,
          xlabel={Degree of Freedom},
          ylabel={Relative $E$-field Error},
          xlabel style={font=\bfseries\small, yshift=2pt},
          ylabel style={font=\bfseries\small, yshift=-2pt},
          grid=both,
          legend style={
         at={(0.02,.02)},
        anchor=south west,
        draw=black,
        rounded corners=2pt,
        font=\footnotesize,
        row sep=1pt,
        /tikz/every even column/.style={column sep=3pt},
        legend image post style={xscale=0.9, yscale=0.9}
          },
          legend cell align=left,
            ymin=1e-3, ymax=3e-2,
            enlarge x limits=false,
            enlarge y limits=false,
            ytick={1e-3,1e-2,1e-1},
        ]

            \addplot[
                thick,
                blue,
                mark=square*,
                mark size=1.6pt,
                opacity=0.9
            ]
            table[x=dof_swpic, y=err_swpic, col sep=comma]
            {Data/error_vs_dof.csv};
            \addlegendentry{SWPIC}

            \addplot[
                thick,
                red,
                mark=*,
                mark size=1.6pt,
                opacity=0.9
            ]
            table[x=dof_spic, y=err_spic, col sep=comma]
            {Data/error_vs_dof.csv};
            \addlegendentry{PIC}

            \addplot[
                thick,
                black,
                dashed,
                opacity=0.9,
                domain=5e4:3e5,
                samples=50
            ]{0.02*(x/5e4)^(-0.5)};
            \addlegendentry{$\propto \mathrm{DOF}^{-1/2}$}
        \end{axis}
    \end{tikzpicture}
}
    \subcaption{$E$ error vs DOF}
    \label{fig:error_vs_dof}
    \end{minipage}
    \caption{Convergence of the relative $E$-field error with respect to (a) the number of particles and (b) the total DOFs. A high-resolution solution from a semi-Lagrangian discontinuous Galerkin code is used as the reference baseline.}
    \label{fig:error_scaling_composite}
\end{figure}

\paragraph{Memory usage}
It has been shown that SWPIC achieved the same accuracy as PIC with roughly one order of magnitude fewer computational particles than PIC.  
This reduction in particle count also leads to a smaller memory usage when measured in total DOFs. In PIC, each particle carries three quantities $(Q, P, \psi^*)$, corresponding to $3N$ DOFs for $N$ particles. Each decorated particle carries five quantities $( Q, P,  q^*, p^*, \psi^*)$, or $5N$ DOFs. As shown in \autoref{tab:memory_comparison}, PIC required about $8.8\times10^4$ particles ($2.6\times 10^5$ DOFs), whereas SWPIC reproduced the same result with only $10^4$ decorated particles ($5\times10^4$ DOFs).  
This shows SWPIC uses only about 19\% of the memory required by PIC for comparable accuracy.   

\begin{table}[ht]
\caption{Comparison of total DOFs and memory usage between PIC and SWPIC for comparable accuracy.}
\centering
\begin{tabular}{lcccc}
\hline
Method & Particles & Variables per Particle & DOFs & Relative Memory \\
\hline
PIC     & $8.8\times 10^4$ & 3 ($Q, P, \psi^*$)              & $2.6\times 10^5$ & 100\% \\
SWPIC   & $10^4$ & 5 ($Q, P, q^*, p^*, \psi^*$)  & $5\times 10^4$  & 19\% \\
\hline
\end{tabular}
\label{tab:memory_comparison}
\end{table}

\paragraph{Computational efficiency}
The final study concerns computational speed.  With $10^4$ clusters, SWPIC attains
the same accuracy in about $259\,$s, nearly an order of magnitude faster than
the corresponding PIC run, which required $8.8\times10^4$ particles and
$2{,}378\,$s.  \autoref{fig:runtime_scaling} shows that SWPIC runtimes increase
almost linearly with the number of clusters (slope $\approx 3.0\times10^{-2}$\,s
per cluster).  For very large cluster counts the moment–correction overhead introduces
a mild slowdown, but within the accuracy–equivalent range
$1\text{--}3\times10^4$ clusters, SWPIC remains roughly three to nine times faster
than PIC.  Combined with the $81\%$ reduction in memory footprint, SWPIC is both
efficient and broadly scalable for large kinetic problems.
  
\begin{figure}[!htp]
    \centering
\begin{tikzpicture}
    \begin{axis}[
      scale only axis,
      width=0.5\linewidth,
      height=0.36\linewidth,
      line width=0.9pt,
      xmode=log, ymode=log,
      xlabel={Effective Particle Count (macro / decorated)},
      ylabel={Runtime (seconds)},
      xlabel style={font=\bfseries\small, yshift=2pt},
      ylabel style={font=\bfseries\small, yshift=-2pt},
      grid=both,
      legend style={
        at={(0.03,0.97)},
        anchor=north west,
        draw=black,
        rounded corners=2pt,
        font=\footnotesize,
        row sep=1pt,
        /tikz/every even column/.style={column sep=3pt},
        legend image post style={xscale=0.9, yscale=0.9}
      },
      legend cell align=left,
      xmin=1e4, xmax=1e5,
      ymin=2e2, ymax=4e3,
      xtick={1e4,1e5},
      ytick={1e2,1e3},
      scaled ticks=false
    ]
        \addplot[
            only marks,
            thick,
            blue,
            mark=*,
            mark size=1.6pt,
            opacity=0.9
        ]
        table[x=Clusters, y=SWPIC_Runtime, col sep=comma]
        {Data/timing_data.csv};
        \addlegendentry{SWPIC}

        \addplot[
            thick,
            red,
            solid,
            opacity=0.9
        ]
        table[x=Clusters, y=SWPIC_Fit, col sep=comma]
        {Data/timing_data.csv};
        \addlegendentry{$\propto N^{1.06}$}

        \addplot[
            only marks,
            mark=square*,
            mark size=1.6pt,
            green!60!black,
            opacity=0.9
        ]
        table[x=SPIC_Clusters, y=SPIC_Runtime, col sep=comma]
        {Data/timing_data.csv};
        \addlegendentry{PIC ($10^5$)}
    \end{axis}
\end{tikzpicture}
\caption{Runtime scaling of SWPIC and PIC with respect to the effective number of simulation particles. For SWPIC, this corresponds to the number of decorated particles, while for PIC it corresponds to microscopic particles. The fitted curve indicates near-linear growth (\(\propto N^{1.06}\)) for SWPIC.} 
\label{fig:runtime_scaling}
\end{figure}

\section{Conclusions}
\label{sec:conclusion}
This work investigates a structure-preserving and computationally efficient alternative to conventional PIC methods through the SWPIC formulation, in which each particle carries internal moment degrees of freedom. By revisiting the original Scovel--Weinstein framework, we develop a formulation that exactly inherits the Lie--Poisson structure of the Vlasov--Poisson system, leading to a discrete Hamiltonian and Poisson bracket that ensure non-dissipative energy behavior. Numerical benchmarks for the two-stream instability and strong Landau damping demonstrate that SWPIC reproduces key physical quantities with roughly an order of magnitude fewer particles than standard PIC, thereby reducing statistical noise, memory demands, and runtime while efficiently capturing desired kinetic dynamics.

\appendix

\section{Left trivialization}
\label{app:lefttrivialization}
This appendix records the left trivialization of the cotangent bundle
\(T^*\mathcal{B}\), where 
\(\mathcal{B}=\mathbb{R}^d_Q\times\mathbb{R}^d_P\times\mathbb{R}_\Psi\)
is the \(d\)-dimensional Heisenberg group. We denote elements of $\mathfrak{b}^*$ and $\mathfrak{b}$ as \((q^*,p^*,\psi^*)\in\mathfrak{b}^*\) and $(q,p,\psi)\in\mathfrak{b}$, respectively.

For \(\overline{B}=(\bar{Q},\bar{P},\bar{\Psi})\) and \(B=(Q,P,\Psi)\) in
\(\mathcal{B}\), the group product is
\[
\overline{B} B
=
\big(
Q+\bar{Q},\;
P+\bar{P},\;
\Psi+\bar{\Psi}
+\tfrac12(\bar{Q}\!\cdot\!P - \bar{P}\!\cdot\!Q)
\big).
\]
Thus, the left multiplication map \(L_{\overline{B}}:\mathcal{B}\to\mathcal{B}\) is
\[
L_{\overline{B}}(Q,P,\Psi)
=
(Q+\bar{Q},\;
 P+\bar{P},\;
 \Psi+\bar{\Psi}+\tfrac12(\bar{Q}\!\cdot\!P - \bar{P}\!\cdot\!Q)).
\]
This implies that the tangent map of $L_{\overline{B}}$ at the identity $e=(0,0,0)\in\mathcal{B}$,  is
\begin{align*}
T_eL_{\overline{B}}&:\mathfrak{b}\rightarrow T_{\overline{B}}\mathcal{B}\\
    T_eL_{\overline{B}}(q,p,\psi) &= \bigg(q,p,\psi+\frac{1}{2}(\overline{Q}\cdot p - \overline{P}\cdot q)\bigg)\in T_{\overline{B}}\mathcal{B}.
\end{align*}
Dualizing, we find
\begin{align*}
(T_eL_{\overline{B}})^*&:T^*_{\overline{B}}\mathcal{B}\rightarrow \mathfrak{b}^* \\
    (T_eL_{\overline{B}})^*(Q^*,P^*,\Psi^*) &= \bigg(Q^* - \frac{1}{2}\Psi^*\overline{P},P^* + \frac{1}{2}\Psi^*\overline{Q},\Psi^*\bigg)\in \mathfrak{b}^*.
\end{align*}
The left-trivialization map $\lambda : T^*\mathcal{B}\rightarrow \mathcal{B}\times\mathfrak{b}^*$ is therefore $\lambda(Q,P,\Psi,Q^*,P^*,\Psi^*) =(Q,P,\Psi,(T_eL_{{B}})^*(Q^*,P^*,\Psi^*)) $, or
\begin{align*}
    \lambda(Q,P,\Psi,Q^*,P^*,\Psi^*) = \bigg(Q,P,\Psi,Q^* - \frac{1}{2}\Psi^*{P},P^* + \frac{1}{2}\Psi^*{Q},\Psi^*\bigg).
\end{align*}
The inverse of the left-trivialization map is
\begin{align*}
    \lambda^{-1}(Q,P,\Psi,q^*,p^*,\psi^*)  = \bigg(Q,P,\Psi,q^* + \frac{1}{2}\psi^*P,p^* - \frac{1}{2}\psi^* Q,\psi^*\bigg).
\end{align*}

The canonical Liouville $1$-form on $T^*\mathcal{B}$ is
\begin{align*}
    \theta_{\text{can}} = Q^*\cdot dQ + P^*\cdot dP + \Psi^* d\Psi.
\end{align*}
Pushing forward along $\lambda$ gives the corresponding Liouville $1$-form on the left-trivialized cotangent bundle, $\mathcal{B}\times\mathfrak{b}^*$,
\begin{align}
    \theta = \lambda_*\theta_{\text{can}} =q^*\cdot dQ + p^*\cdot dP+ \psi^*\,d\Psi+ \frac{1}{2}\psi^*(P\cdot dQ-Q\cdot dP).\label{eq:canonical_1form_appendix}
\end{align}
The symplectic form on $\mathcal{B}\times\mathfrak{b}^*$ is then $\omega_{\mathcal{B}\times\mathfrak{b}^*} = - d\theta$.
Computing term by term,
\begin{gather*}
d(q^*\!\cdot dQ)
= dq^*_i \wedge dQ^i,\quad
d(p^*\!\cdot dP) =  dp^{*\,i} \wedge dP_i,\quad 
d(\psi^* d\Psi) = d\psi^* \wedge d\Psi,\\
d\!\Big(\tfrac12\psi^*(P\!\cdot dQ - Q\!\cdot dP)\Big)
=
\tfrac12\,d\psi^*\wedge (P\!\cdot dQ - Q\!\cdot dP)
+ \psi^*\, dP_i\wedge dQ^i,
\end{gather*}
where $i\in\{1,\dots, d\}$.

Introduce unified coordinates
\[
Z^\mu =
\begin{cases}
Q^\mu, & 1\le\mu\le d,\\
P_{\mu-d}, & d<\mu\le2d,
\end{cases}
\qquad
z^*_\mu =
\begin{cases}
q^*_\mu, & 1\le\mu\le d,\\
p^{*\,{\mu-d}}, & d<\mu\le2d.
\end{cases}
\]
Then
\[
- dq^*_i\wedge dQ^i
- dp^{*\,i}\wedge dP_i
=
- dZ^\mu\wedge dz^*_\mu.
\]
Putting all terms together gives the following explicit expression for the symplectic form:
\begin{equation}
\omega_{B\times\mathfrak{b}^*}
= dZ^\nu\wedge dz^*_\nu
+ d\Psi\wedge d\psi^*
+ \psi^*\; dQ^i\wedge dP_i
+ \frac12\; d\psi^*\wedge (Q^i\, dP_i - P_i\, dQ^i).
\label{eq:symplectic_2form_appendix}
\end{equation}

\section{Poisson bracket induced by the left-trivialized symplectic form}
\label{app:poissonbracket}
Starting from the symplectic two–form \eqref{eq:symplectic_2form_appendix}, 
we now derive the corresponding Poisson bracket on
$\mathcal{B}\times\mathfrak{b}^*$.

Let $H\in C^\infty(\mathcal{B}\times\mathfrak{b}^*)$.
The Hamiltonian vector field $X_H$ is defined by
\[
\iota_{X_H}\omega_{B\times\mathfrak{b}^*} = dH.
\]
Write
\[
X_H
=
\Big(\dot Z^\mu\,\frac{\partial}{\partial Z_\mu}
    +\dot z^{*\,\mu}\,\frac{\partial}{\partial z^*_\mu}\Big)
+ \dot\Psi\,\frac{\partial}{\partial\Psi}
+ \dot\psi^*\,\frac{\partial}{\partial\psi^*}.
\]
Contracting \eqref{eq:symplectic_2form_appendix} with $X_H$ and equating to $dH$ yields the Hamilton equations:
\begin{subequations}
\begin{align}
\dot Q_i &= \frac{\partial H}{\partial q^*_i}, &
\dot P_i &= \frac{\partial H}{\partial p^{*}_i},
\\
\dot\Psi &= \frac{\partial H}{\partial\psi^*}
           +\frac12\left(Q^i\frac{\partial H}{\partial p^{*}_i}
                        -P^i\frac{\partial H}{\partial q^{*}_i}\right),
&
\dot\psi^* &= -\frac{\partial H}{\partial\Psi},\\
\dot q^*_i &= -\frac{\partial H}{\partial Q_i}
             -\psi^*\,\frac{\partial H}{\partial p^{*}_i}
             +\frac12\,P_i\,\frac{\partial H}{\partial \Psi},
&
\dot p^{*}_i &= -\frac{\partial H}{\partial P_i}
               +\psi^*\,\frac{\partial H}{\partial q^*_i}
               -\frac12\,Q_i\,\frac{\partial H}{\partial \Psi},
\end{align}
\label{eq:HamApp}
\end{subequations}

For $f,g\in C^\infty(\mathcal{B}\times\mathfrak{b}^*)$, the Poisson bracket is
defined by
\[
\{f,g\}_{\mathcal{B}\times\mathfrak{b}^*}
:= X_f(g)
= \dot\xi^\alpha_f\,\frac{\partial g}{\partial\xi_\alpha},
\]
where $\xi^\alpha$ ranges over 
$(Z^\nu,z^*_\nu,\Psi,\psi^*)$,  
and $\dot\xi^\alpha_f$ is obtained from \eqref{eq:HamApp} with $H=f$.
Substituting these expressions and antisymmetrizing in $f$ and $g$ gives the explicit coordinate form:
\begin{align*}
\{f,g\}_{\mathcal{B}\times\mathfrak b^*}
&= \left(
\frac{\partial f}{\partial Z^\mu}\frac{\partial g}{\partial z_\mu^*}
-
\frac{\partial g}{\partial Z^\mu}\frac{\partial f}{\partial z_\mu^*}
\right)
+ \frac{\partial f}{\partial\Psi}\frac{\partial g}{\partial\psi^*}
- \frac{\partial g}{\partial\Psi}\frac{\partial f}{\partial\psi^*}\\
&\quad- \psi^*\left(
\frac{\partial f}{\partial q_i^*}\frac{\partial g}{\partial p^{*i}}
-
\frac{\partial g}{\partial q_i^*}\frac{\partial f}{\partial p^{*i}}
\right)\\
&\quad
+ \frac12\!\left[
\frac{\partial f}{\partial\Psi}
\!\left(
Q_i\frac{\partial g}{\partial p^{*i}}
-
P_i\frac{\partial g}{\partial q^{*i}}
\right)
-
\frac{\partial g}{\partial\Psi}
\!\left(
Q_i\frac{\partial f}{\partial p^{*i}}
-
P_i\frac{\partial f}{\partial q^{*i}}
\right)
\right].
\end{align*}

\newpage
\bibliographystyle{siamplain}
\bibliography{cumulative_bib_file}
\end{document}